\documentclass[11pt]{article}
\usepackage[mathlines]{lineno}
\usepackage[margin=1.2in]{geometry} 
\usepackage{indentfirst} 
\setlength{\parindent}{2em} 
\usepackage{amsmath}      
\usepackage{amsthm}       
\usepackage{amssymb}      
\usepackage{algorithm}    
\usepackage{algpseudocode}
\usepackage{graphicx}     
\usepackage{booktabs}     
\usepackage{cite}         
\usepackage{fancyhdr}     

\usepackage{amsmath}
\usepackage{enumitem}
\usepackage{listings}
\usepackage{xcolor}
\usepackage{natbib}

\usepackage{multirow}

\usepackage{float} 
\newcommand{\argmin}{\mathop{\arg\min}}

\newtheorem{theorem}{Theorem}[section]
\newtheorem{lemma}{Lemma}[section]
\newtheorem{proposition}{Proposition}[section]
\newtheorem{definition}{Definition}[section]
\newtheorem{assumption}{Assumption}[section] 

\newtheorem{corollary}{Corollary}[section]
\numberwithin{equation}{section} 

\title{An efficient proximal algorithm for squared L1 over L2 regularized sparse recovery\thanks{		
		This work was supported by the National Natural Science Foundation of China under grants 12271181 and 12471098, and by the Guangzhou Basic Research Program under grant 2025A04J5240, and by the Basic and Applied Basic Research Foundation of Guangdong Province under grant 2023A1515030046, and by the Guangdong Province Key Laboratory of Computational Science at the Sun Yat-sen University (2020B1212060032).
		\newline $\indent\: ^{1}$Department of Applied Mathematics, College of Mathematics and Informatics, South China Agricultural University, Guangzhou 510642, China. 
		\newline $\indent\: ^{2\dagger}$Corresponding author. School of Computer Science and Engineering, Guangdong Province Key Laboratory of Computational Science, Sun Yat-sen University, Guangzhou 510275, China (liqia@mail.sysu.edu.cn).
		\newline $\indent\: ^{3}$School of Computer Science and Engineering, Sun Yat-sen University, Guangzhou 510275, China.
		
	}
}

\author{Na Zhang$^{1}$
\and Hong Chen$^{1}$
\and Qia Li$^{2\dagger}$
\and Junpeng Zhou$^{3}$}
\begin{document}
\maketitle	
\begin{abstract}
In this paper, we consider a squared $L_1/L_2$ regularized model for sparse signal recovery from noisy measurements. We first establish the existence of optimal solutions to the model under mild conditions. Next, we propose a proximal method for solving a general fractional optimization problem which has the squared $L_1/L_2$ regularized model as a special case. We prove that any accumulation point of the solution sequence generated by the proposed method is a critical point of the fractional optimization problem. Under additional KL assumptions on some potential function, we establish the sequential convergence of the proposed method. When this method is specialized to the squared $L_1/L_2$ regularized model, the proximal operator involved in each iteration admits a simple closed form solution that can be computed with very low computational cost. Furthermore, for each of the three concrete models, the solution sequence generated by this specialized algorithm converges to a critical point. Numerical experiments demonstrate the superiority of the proposed algorithm for sparse recovery based on squared $L_1/L_2$ regularization.
\end{abstract}
	
\begin{keywords}
	squared $L_{1}/L_{2}$ regularization, proximal algorithm, sparse recovery.
\end{keywords}
	
	
\section{Introduction}
    The compressive sensing (CS) problem seeks to estimate the unknown sparse signal $x \in \mathbb{R}^n$ from the possibly noisy measurement $b\in \mathbb{R}^m$ $(m\ll n)$ given by 
    \[
    b = Ax + z,
    \]
    where $A\in \mathbb{R}^{m\times n}$ is a sensing matrix and $z\in \mathbb{R}^m$ is some noise vector. Mathematically, in the case where the noise $z$ follows Gaussian distribution, a typical regularization method for CS takes the form of 
    \begin{equation}\label{equation1.1}
    	\underset{x \in \mathbb{R}^n}{\min} \lambda R(x)+\frac{1}{2}\|Ax-b\|_2^2,
    \end{equation}
    where $R(\cdot)$ is the sparsity regularizer and $\lambda>0$ is a regularization parameter balancing the two terms involved. Various choices of the sparsity regularizer have been introduced and successfully used in \eqref{equation1.1}, such as the $\ell_0$ function \cite{BLUMENSATH2009265}, $\ell_{1}$ norm \cite{doi:10.1137/S1064827596304010}, the $\ell_p$ quasi norm $(0<p<1)$ \cite{4303060,6205396}, the log-sum penalty function \cite{candes2008enhancing}, the minimax-concave penalty function \cite{Zhang2010NearlyUV}, the capped-$\ell_1$ function \cite{JMLR:v11:zhang10a}, the $\ell_1-\ell_2$ function \cite{2013A} and so on. Among these sparsity regularizers, the $\ell_0$ function $\|x\|_0$, which counts the number of nonzero entries in $x$, is the most natural mathematical measure of sparsity and plays an important theoretical role in many aspects of CS. However, it has a practical drawback of being highly sensitive to small entries due to its discontinuity, see \cite{DBLP:journals/corr/abs-1204-4227}, for example. All the other regularizers can be viewed as continuous approximations to the $\ell_0$ function, and designed to alleviate this drawback.\\ 
    \indent In \cite{5985551,DBLP:journals/corr/abs-1204-4227,7506083}, the squared $\ell_1/\ell_2$ norm, $\|\cdot\|_1^2/\|\cdot\|_2^2$ is highly recommended as an alternative measure of sparsity and the reasons are roughly summarized as follows. First, it provides a continuous sharp lower bound for $\|\cdot\|_0$, that is, for all nonzero $x$, it holds that
    \[
    \frac{\|x\|_1^2}{\|x\|_2^2}\leq \|x\|_0,
    \]
    where the equality is attained if and only if the nonzero entries of $x$ are equal in magnitude. Second, the squared $\ell_1/\ell_2$ norm can be interpreted as an important example of the so-called effective sparsity, which is defined based on entropy and quantifies the “effective number of coordinates of $x$”. Third, it is also a sensible measure of sparsity for non-idealized signal, as illustrated in \cite[Frgure 1]{DBLP:journals/corr/abs-1204-4227}. Despite its theoretical advantage, in existing works the squared $\ell_1/\ell_2$ norm has seldom been used for sparsity regularization in model \eqref{equation1.1} or other practical applications. In contrast, the $\ell_1/\ell_2$ norm, has been successfully used for sparsity regularization in many applications, such as CS \cite{doi:10.1137/18M123147X,2014Ratio,zeng2021analysis,doi:10.1137/20M136801X,2023Study,han2025single}, CT reconstruction \cite{doi:10.1137/20M1341490,wang2022minimizing}, blind deconvolution \cite{5995521}, nonnegative matrix factorization \cite{DBLP:journals/corr/cs-LG-0408058} and so on. Very recently, the authors consider the following constrained squared $\ell_1/\ell_2$ minimization problem for sparse signal recovery in \cite{jia2024sparserecoverysquareell1ell2}  
    \begin{equation}\label{equati:1.2}
    	\underset{x\in\mathbb{R}^n}{\min} \; \left\{\frac{\|x\|_1^2}{\|x\|_2^2} :\|Ax - b\|_2\leq \sigma\right\},
    \end{equation}
    where $0<\sigma<\|b\|_2$ measures the noise level. However, this model differs from \eqref{equation1.1} where $R$ is chosen as the squared $\ell_1/\ell_2$ norm. In fact, it can be viewed as a $\ell_1/\ell_2$ regularized model, because it is clearly equivalent to the formulation obtained by replacing its objective function with the $\ell_1/\ell_2$ norm. Given the above background, we are motivated to investigate the squared $\ell_2/\ell_2$ regularized model for CS, with emphasis on developing efficient numerical methods for solving the associated optimization problems.   \\   
    \indent In this paper, we consider the following squared $\ell_1/\ell_2$ regularized model for CS with noisy measurements 
    \begin{equation}
    	\underset{x\in\mathcal{X}}{\min} \; \left\{\lambda\frac{\|x\|_1^2}{\|x\|_2^2} + q(Ax - b)\right\},
    	\label{eq:problem1}
    \end{equation}
    where the loss function $q: \mathbb{R}^m\to[0,+\infty)$ can be written as $q=q_1-q_2$ with $q_1:\mathbb{R}^m \to \mathbb{R}$ being continuously differentiable with Lipschitz continuous gradient and $q_2:\mathbb{R}^m \to \mathbb{R}$ being convex continuous, and $\mathcal{X}\subseteq \mathbb{R}^n$ is closed hyperrectangle (possibly unbounded with $0\in\mathcal{X}$). Below we list three concrete examples of \eqref{eq:problem1}, which are equipped with some widely used loss functions for modeling different types of noise.
    \begin{itemize}
    	\item Gaussian noise: When the noise in the measurement follows the Gaussian distribution, the quadratic function $\frac{1}{2}\|\cdot\|_2^2$ is typically employed as the loss function $q$ \cite{candes2006stable,chen2001atomic}. One may consider the following problem as
    	\begin{equation}
    		\underset{x\in\mathcal{X}}{\min} \;\left\{\lambda\frac{\|x\|_1^2}{\|x\|_2^2}+\frac{1}{2}\|Ax-b\|_2^2\right\}.
    		\label{eq:1.2}
    	\end{equation}
    	Problem \eqref{eq:1.2} is a special instance of model \eqref{eq:problem1} with $q(y)=q_1(y)=\frac{1}{2}\|y\|_2^2$ and $q_2=0$.
    	\item Cauchy noise: When the noise of the measurement follows the Cauchy distribution, the Lorentzian norm  $\|y\|_{LL_2,\gamma}:=\sum_{i=1}^m \log(1+\gamma^{-2}y_{i}^2)$ with $\gamma>0$ is usually utilized to model the noise \cite{carrillo2010robust,carrillo2016robust}. In this scenario, one may consider the following problem:
    	\begin{equation}
    		\underset{x\in\mathcal{X}}{\min} \;\left\{\lambda\frac{\|x\|_1^2}{\|x\|_2^2}+\|Ax-b\|_{LL_2,\gamma}\right\}.
    		\label{eq:1.3}
    	\end{equation} 
    	Note that the Lorentzian norm is continuously differentiable with Lipschitz continuous gradient. Then problem  \eqref{eq:1.3} corresponds to \eqref{equation1.1} with $q(y)=q_1(y)=\|y\|_{LL_2,\gamma}$ and $q_2=0$.
    	\item Robust compressed sensing: In this scenario, the measurement is corrupted by both Gaussian noise and electromyographic noise \cite{carrillo2016robust,polania2012compressive}, where the latter noise is sparse and may have large outliers. As in \cite[Section 5.1]{liu2019refined}, one may adopt the loss function $q(y)=\frac{1}{2}\mathrm{dist}^2(y,S_r)$, where $S_r := \{z\in\mathbb{R}^m : \|z\|_0\leq r\}$ and $r$ is an estimate of the number of outliers. This yields the following problem
    	\begin{equation}
    	\underset{x\in\mathcal{X}}{\min}\left\{\lambda\frac{\|x\|_1^2}{\|x\|_2^2}+\frac{1}{2}\mathrm{dist}^2(Ax-b,S_r)\right\}.
    		\label{eq:1.4}
    	\end{equation} 
    	Notice that
    	\[
    	\begin{aligned}
    		\frac{1}{2}\mathrm{dist}^2(y,S_r)&=\frac{1}{2}\min\{\|y-z\|_2^2:\|z\|_0\leq r\}\\
    		&=\frac{1}{2}\|y-T_r(y)\|_2^2=\frac{1}{2}\|y\|_2^2-\frac{1}{2}\|T_r(y)\|_2^2,
    	\end{aligned}	
    	\]
    	where $T_r$ is the projection operator onto $S_r$. It is not hard to verify the convexity of $\|T_r(y)\|_2^2$. Hence, problem \eqref{eq:1.4} is a special instance of model \eqref{eq:problem1} with $q_1(y)=\frac{1}{2}\|y\|^2$ and $q_2(y)=\frac{1}{2}\|T_r(y)\|_2^2$.
    \end{itemize}  
    \hspace*{\parindent}To the best of our knowledge, existing first-order methods for general nonsmooth optimization are confined to solving \eqref{eq:problem1} in the case of $\mathcal{X}=\mathbb{R}^n$. This limitation arises because all these methods require computing the proximal operator of $\lambda\|\cdot\|_1^2/\|\cdot\|_2^2+\iota_{\mathcal{X}}$ at each iteration when they are applied to \eqref{eq:problem1}. Here, $\iota_{\mathcal{X}}$ denotes the indicator function of $\mathcal{X}$.    
    However, closed-form solutions for this proximal operator are only available in the case of $\mathcal{X}=\mathbb{R}^n$ \cite{jia2024computingproximityoperatorsscale}, and it is unknown for a general $\mathcal{X}$. Moreover, even when $\mathcal{X}=\mathbb{R}^n$, computing these closed-form solutions has high complexity, which may adversely affecting the computational efficiency. To overcome these challenges, in this work we develop an efficient proximal algorithm for solving model by exploiting its fractional structure. The proposed method is not only applicable to the model with an arbitrary closed hyperrectangle $\mathcal{X}$ but also enjoys low computational complexity per iteration. Specifically, the contributions of this paper are summarized as follows. First, we present some conditions that rigorously ensure the existence of optimal solutions to \eqref{eq:problem1}. Note that this issue is not trivial as the extended objective of \eqref{eq:problem1} is generally not closed and the constrained set $\mathcal{X}$ may be unbounded. In particular, we relate the existence of optimal solutions to the $s$-spherical section property \cite{vavasis2009derivation,zhang2013theory} of $\operatorname{ker} A$, when $\mathcal{X}$ is unbounded. Next, we propose a novel first-order method for solving a general fractional program which has model \eqref{eq:problem1} as a special case. Different from the aforementioned methods, the proposed method requires computing the proximal operator of $\sqrt{\lambda}\|\cdot\|_1+\iota_{\mathcal{X}}$ when it is applied to the model \eqref{eq:problem1}. This operator admits a simple closed form solution that can be evaluated with very low computational cost. We prove that any accumulation point of the solution sequence generated by the proposed algorithm is a critical point of the fractional program.  
    Moreover, by imposing additional KL assumptions on some potential function, we establish the sequential convergence of the proposed method. Consequently, we prove the sequential convergence of the proposed method to a critical point when it is specialized to solving models \eqref{eq:1.2}, \eqref{eq:1.3} and \eqref{eq:1.4}. Finally, we perform numerical experiments on sparse recovery from noisy measurements to demonstrate the performance of our proposed algorithm.\\
    \indent This paper is organized as follows. In section \ref{sec2}, we introduce notation and review some preliminaries. The existence of optimal solutions for \eqref{eq:problem1} is studies in section \ref{sec3}. In section \ref{sec4}, we propose our numerical algorithm and establish its subsequent convergence. The sequential convergence of the proposed algorithm is further analyzed in section \ref{sec6}. Finally, we present numerical results in section \ref{sec:6}.
    \section{Notation and preliminaries} \label{sec2}
    We begin with our preferred notations. We denote by $\mathbb{R}^n$ and $\mathbb{N}$ the $n$-dimensional Euclidean space and the set of nonnegative integers, respectively. The $\ell_{1}$-norm, the $\ell_{2}$-norm and the inner-product of $\mathbb{R}^n$ are denoted respectively by $\|\cdot\|_1$, $\|\cdot\|_2$ and $ \langle\cdot,\cdot\rangle$. We use  $\mathcal{B}(x,\delta)$ to denote an open ball centered at $x \in \mathbb{R}^n$ with radius $ \delta > 0 $, i.e., 
    $\mathcal{B}(x,\delta) := \{ z \in \mathbb{R}^{n} : \|x - z\|_2 < \delta \}$. The Cartesian product of the sets $\mathcal{S}_1$ and $ \mathcal{S}_2$ is denoted by $ \mathcal{S}_1 \times \mathcal{S}_2 $. The distance from a vector $x \in \mathbb{R}^{n}$ to a set $ \mathcal{S} \subseteq \mathbb{R}^{n}$ is denoted by 
    $\mathrm{dist}(x, \mathcal{S}) := \inf \{ \|x - y\|_2 : y \in \mathcal{S} \}$. The indicator function on a nonempty set $\mathcal{S} \subseteq \mathbb{R}^{n}$ is defined by
    \[
    \iota_{\mathcal{S}}(x) = 
    \begin{cases} 
    	0, & \text{if } x \in \mathcal{S}, \\ 
    	+\infty, & \text{else}. 
    \end{cases}
    \]
    \indent We next review some preliminaries on subdifferentials of nonconvex functions \cite{cui2021modern,Mordukhovich2006,rockafellar1998variational}. An extended-real-valued function $ \varphi : \mathbb{R}^{n} \to (-\infty, +\infty]$ is said to be proper if its domain $ \mathrm{dom} \varphi := \{ x \in \mathbb{R}^{n} : \varphi(x) < +\infty \}$ is nonempty. A proper function $ \varphi $ is said to be closed if it is lower semi-continuous on $\mathbb{R}^{n}$. For a proper function $\varphi$, the Fréchet subdifferential at $x \in \mathrm{dom}\varphi$ is defined by
    \[
    \widehat{\partial}\varphi(x) := \left\{ y \in \mathbb{R}^n : \liminf_{\substack{z \to x \\ z \neq x}} \frac{\varphi(z) - \varphi(x) - \langle y, z - x \rangle}{\| z - x \|_2} \geq 0 \right\},
    \]
    and the limiting subdifferential at $x \in \mathrm{dom}\varphi$ is defined by
    \[
    \partial \varphi(x) := \left\{ y \in \mathbb{R}^n : \exists x^k \to x,  \varphi(x^k) \to \varphi(x), y^k \to y \text{ with } y^k \in \widehat{\partial} \varphi(x^k) \text{ for each } k \right\}.
    \]
    We set $\widehat{\partial} \varphi(x) = \partial \varphi(x) = \emptyset$ by convention when $x \notin \mathrm{dom}\varphi$, and define $\mathrm{dom}\partial \varphi := \{ x : \partial \varphi(x) \neq \emptyset \}$. It holds that $\widehat{\partial} \varphi(x) = \{ \nabla \varphi(x) \}$ if $\varphi$ is differentiable at $x$, and $\partial \varphi(x) = \{ \nabla \varphi(x) \}$ holds when $\varphi$ is continuously differentiable around $x$ (\cite[ Exercise 8.8(b)]{rockafellar1998variational}). For a convex $\varphi$, the Fréchet and limiting subdifferential both consist with the classical subdifferential of a convex function at each $x \in \mathrm{dom}\varphi$ (\cite[ Proposition 8.12]{rockafellar1998variational}), that is,
    \[
    \widehat{\partial} \varphi(x) = \partial \varphi(x) = \{ y \in \mathbb{R}^n : \varphi(z) - \varphi(x) - \langle y, z - x \rangle \geq 0, \text{ for all } z \in \mathbb{R}^n \}.
    \]
    For proper functions $\varphi_1, \varphi_2 : \mathbb{R}^n \to (-\infty, +\infty]$, there holds $\widehat{\partial} \varphi_1(x) + \widehat{\partial} \varphi_2(x) \subseteq \widehat{\partial} (\varphi_1 + \varphi_2)(x)$ (\cite[ Corollary 10.9] {rockafellar1998variational}), where the equality holds if $\varphi_1$ or $\varphi_2$ is differentiable at $x$ (\cite[ Section 8.8 Exercise] {rockafellar1998variational}). For a function $\varphi : \mathbb{R}^n \to (-\infty, +\infty]$, the generalized \emph{Fermat's Rule} states that $0 \in \widehat{\partial} \varphi(x^*)$ holds if $x^* \in \mathrm{dom} \varphi$ is a local minimizer of $\varphi$ (\cite[ Theorem 10.1] {rockafellar1998variational}).\\
    \indent We finally review some properties of real-valued convex functions. Let $\varphi:\mathbb{R}^{n}\to\mathbb{R}$ be convex. Then for any bounded $\mathcal{S}\subseteq\mathbb{R}^{n}$, the $\varphi$ is Lipschitz continuous on $\mathcal{S}$ and $\cup_{x\in\mathcal{S}}\partial\varphi(x)$. The conjugate of $\varphi$, denote by $\varphi^{\star} : \mathbb{R}^n \to [-\infty, +\infty]$, is defined at $y\in\mathbb{R}^{n}$ as 
    $\varphi^{\star}(y):=\sup\{\langle x,y\rangle-\varphi(x):x\in\mathbb{R}^{n}\}$. The conjugate $\varphi^{\star}$ is a proper closed convex function \cite[ Theorem 12.2]{tyrrell1970convex}. For any $x,y\in\mathbb{R}^{n}$, the following three statements are equivalent (\cite[ Proposition 11.3]{rockafellar1998variational}: 
    $\langle x,y\rangle=\varphi(x)+\varphi^{\star}(y) \Leftrightarrow y\in\partial\varphi(x) \Leftrightarrow x\in\partial\varphi^{\star}(y)$. Therefore, it is clear that the \emph{Fenchel-Young Inequality}, i.e., $\varphi(x)+\varphi^{\star}(y)\geq\langle x,y\rangle$, holds for all $x,y\in\mathbb{R}^{n}$, and it becomes an equality when $y\in\partial\varphi(x)$. 
    
    \section{Solution existence of model \eqref{eq:problem1}}\label{sec3}
    \indent In this section, we investigate the existence of optimal solutions to model  \eqref{eq:problem1} under some suitable conditions. First, due to the fact that $q\geq0$ and $\|x\|_1^2/\|x\|_2\geq1$ for all $x\in\mathbb{R}^n$, we know that the infimum of model \eqref{eq:problem1} always exists and denote it by $\nu^{\star}$, i.e.,
    \[
     \nu^{\star}=\text{inf}\left\{\lambda\frac{\|x\|_1^2}{\|x\|_2}+q(Ax-b):x\in\mathcal{X}\right\}.
    \]
    One can easily check that
    \begin{equation}\label{eqt3.1}
    	\liminf_{\substack{x \to 0 \\ x \in \mathcal{X}}} \lambda\frac{\|x\|_1^2}{\|x\|_2}+q(Ax-b)=\lambda+q(-b).
    \end{equation}
    Invoking \eqref{eqt3.1} we immediately see that if the zero vector is not an optimal solution to model \eqref{eq:problem1} if and only if
    \begin{equation}\label{eqt3.2}
    	\nu^{\star}<\lambda+q(-b).
    \end{equation} 
    Hence, in order to preclude the trivial solution, we adopt the blanket assumption that \eqref{eqt3.2} holds for model \eqref{eq:problem1} in the rest of this paper. Next, we view the concepts of a minimizing sequence and spherical section property \cite{vavasis2009derivation,zhang2013theory}, which will be used in subsequent analysis in this section. Recall that $\left\{x^k:k\in\mathbb{N}\right\}$ is called a minimizing sequence of model \eqref{eq:problem1} if $x^k\in\mathcal{X}\setminus \{0\}$ for all $k\in\mathbb{N}$ and 
    \begin{equation}\label{eqt3.3}
    	\lim_{k\to\infty} \lambda \|x^k\|_1^2/\|x^k\|_2+q(Ax^k-b)=\nu^{\star}.
    \end{equation}  
    The spherical section property is defined as follows.
    \begin{definition}(spherical section property  \cite{vavasis2009derivation,zhang2013theory})
    	Let $m$, $n$ be two positive integers such that $m<n$. Let $V$ be an $n$-$m$-dimensional subspace of $\mathbb{R}^n$ and $s$ be a positive integer. We say that $V$ has the $s$-spherical section property if
    	\[
    	\underset{v\in V\setminus \{0\}}{\inf} \frac{\|v\|_1}{\|v\|_2} \geq \sqrt{\frac{m}{s}}.
    	\]
    \end{definition}
    According to \cite{zhang2013theory}, if $A\in\mathbb{R}^{m\times n} (m<n)$ is a random matrix with independently and identically distributed (i.i.d.) standard Gaussian entries, then its ($n$-$m$)-dimensional null space exhibits the $s$-spherical section property for $s=a_1(\log(n/m)+1)$ with probability at least $1-e^{-a_0(n-m)}$, where $a_0,a_1>0$ are constants independent of $m$ and $n$.\\
    \indent The following lemma concerns a minimizing sequence of model \eqref{eq:problem1}. 
    \begin{lemma}\label{lem:1}
    	Let \( \{x^k : k \in \mathbb{N}\} \) be a minimizing sequence of model \eqref{eq:problem1}. Then any accumulation point of \( \{x^k : k \in \mathbb{N}\} \) is an optimal solution of \eqref{eq:problem1}.
    \end{lemma}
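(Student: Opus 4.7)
The plan is to take an arbitrary accumulation point $x^\star$ of the minimizing sequence $\{x^k\}$, along with a subsequence $\{x^{k_j}\}$ converging to $x^\star$, and show $x^\star$ attains the infimum $\nu^\star$. The argument splits according to whether $x^\star = 0$ or $x^\star \neq 0$. Since $\mathcal{X}$ is closed and each $x^{k_j} \in \mathcal{X}$, we immediately get $x^\star \in \mathcal{X}$, so feasibility is free.

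The main obstacle is ruling out the degenerate possibility $x^\star = 0$, because at the origin the fractional term $\|\cdot\|_1^2/\|\cdot\|_2^2$ (or $\|\cdot\|_1^2/\|\cdot\|_2$, per the notation used in this section) is not defined and the extended objective fails to be lower semicontinuous in a way compatible with optimality. To handle this, I would invoke the blanket assumption \eqref{eqt3.2}, $\nu^\star < \lambda + q(-b)$, together with the liminf identity \eqref{eqt3.1}. Specifically, if $x^{k_j} \to 0$ with $x^{k_j} \in \mathcal{X}\setminus\{0\}$, then a standard sequential characterization of $\liminf_{x\to 0,\, x\in\mathcal{X}}$ yields
\[
\liminf_{j\to\infty}\left(\lambda\frac{\|x^{k_j}\|_1^2}{\|x^{k_j}\|_2} + q(Ax^{k_j}-b)\right) \geq \lambda + q(-b).
\]
But the minimizing sequence property \eqref{eqt3.3} forces the left-hand side to equal $\nu^\star$, which contradicts \eqref{eqt3.2}. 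Therefore $x^\star \neq 0$.

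Once $x^\star \neq 0$ is established, the remainder is routine. The map $x \mapsto \|x\|_1^2/\|x\|_2$ is continuous on $\mathbb{R}^n\setminus\{0\}$, and $x \mapsto q(Ax-b)$ is continuous on all of $\mathbb{R}^n$ (both $q_1$ and $q_2$ in the decomposition $q=q_1-q_2$ are continuous, the former by $C^1$ smoothness and the latter by convexity and finiteness). Hence passing to the limit along the subsequence gives
\[
\lambda\frac{\|x^\star\|_1^2}{\|x^\star\|_2} + q(Ax^\star-b) \;=\; \lim_{j\to\infty}\!\left(\lambda\frac{\|x^{k_j}\|_1^2}{\|x^{k_j}\|_2} + q(Ax^{k_j}-b)\right) \;=\; \nu^\star,
\]
so $x^\star$ attains the infimum on $\mathcal{X}\setminus\{0\}$, and by \eqref{eqt3.2} it attains it on $\mathcal{X}$ as well, proving optimality. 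In summary, the nontrivial content is the use of \eqref{eqt3.1}--\eqref{eqt3.2} to exclude the only point of discontinuity; everything else reduces to continuity plus closedness of $\mathcal{X}$.
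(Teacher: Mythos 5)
Your proposal is correct and follows essentially the same route as the paper's own proof: feasibility via closedness of $\mathcal{X}$, exclusion of the trivial accumulation point $x^{\star}=0$ by combining \eqref{eqt3.1}, \eqref{eqt3.2} and \eqref{eqt3.3}, and then continuity of the objective on $\mathcal{X}\setminus\{0\}$ to pass to the limit. The only difference is that you spell out the contradiction argument for $x^{\star}\neq 0$ in more detail than the paper does, which is a faithful expansion rather than a different approach.
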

    \begin{proof}
    	By passing to a subsequence if necessary, we may assume without loss of generality that \( \lim_{k \to \infty} x^k = x^{\star} \) for some \( x^{\star} \). Then it follows from the closedness of \( \mathcal{X} \) that $x^{\star}\in\mathcal{X}$. Invoking \eqref{eqt3.1}, \eqref{eqt3.2} and \eqref{eqt3.3}, we know that $x^{\star}\neq0$. Using this, the continuity of the objective function of model \eqref{eq:problem1} on $\mathcal{X}\setminus \{0\}$ and \eqref{eqt3.3}, we conclude that $\lambda\|x^{\star}\|_1^2/\|x^{\star}\|_2 + q(Ax^{\star}-b)=\nu^{\star}$. This complete the proof.
    \end{proof}
    \indent Now, we are ready to establish the main theorem of this section.
    	\begin{theorem}\label{the:2.1}
    		The set of optimal solutions to model \eqref{eq:problem1} is nonempty if one of the following statements holds:
    		\begin{enumerate}[label=\textup{(\roman*)}]
    		\item \( \mathcal{X} \) is bounded;\label{en:2.1}
    		\item \( q \) is coercive, \( \ker A \) has the \( s \)-spherical section property for some \( s > 0 \), and there exists \(\widetilde{x}\in\mathbb{R}^n\) such that \( 0 \neq \widetilde{x} \in \argmin_{x \in \mathcal{X}} q(Ax - b) \) and \( \|\widetilde{x}\|_0 <m/s \). \label{en:2.2}
    	    \end{enumerate}
        \end{theorem}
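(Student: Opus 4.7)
The plan is to invoke Lemma \ref{lem:1}: once every minimizing sequence of \eqref{eq:problem1} is shown to possess an accumulation point, that point is automatically an optimal solution. In case \ref{en:2.1} the conclusion is immediate, since boundedness of $\mathcal{X}$ confines any minimizing sequence to a compact subset of $\mathbb{R}^n$, so Bolzano--Weierstrass supplies the accumulation point and Lemma \ref{lem:1} applies.

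The substantive content lies in case \ref{en:2.2}, where I would argue by contradiction that a minimizing sequence $\{x^k\}$ is bounded. Suppose $\|x^k\|_2\to\infty$ along a subsequence. Convergence of the objective together with nonnegativity of both terms forces $\{q(Ax^k-b)\}$ to be bounded, so coercivity of $q$ forces $\{Ax^k\}$ to be bounded. Normalizing $u^k := x^k/\|x^k\|_2$ gives $\|u^k\|_2=1$ and $Au^k=Ax^k/\|x^k\|_2\to 0$; passing to a further subsequence, $u^k$ converges to some $u^{\star}$ with $\|u^{\star}\|_2=1$ and $Au^{\star}=0$, i.e., $u^{\star}\in\ker A\setminus\{0\}$.

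I would then chain the two remaining hypotheses. The $s$-spherical section property applied to $u^{\star}$ yields $\|u^{\star}\|_1^2\geq m/s$, so by scale invariance $\lim_k \|x^k\|_1^2/\|x^k\|_2^2 = \|u^{\star}\|_1^2\geq m/s$. Combined with $q(A\widetilde{x}-b)\leq q(Ax^k-b)$, which holds because $\widetilde{x}\in\argmin_{x\in\mathcal{X}}q(Ax-b)$ and $x^k\in\mathcal{X}$, passing to the limit in the minimizing-sequence relation gives $\nu^{\star}\geq \lambda\, m/s + q(A\widetilde{x}-b)$. On the other hand, Cauchy--Schwarz yields $\|\widetilde{x}\|_1^2\leq \|\widetilde{x}\|_0\,\|\widetilde{x}\|_2^2$, so since $\widetilde{x}\in\mathcal{X}\setminus\{0\}$ the feasibility estimate $\nu^{\star}\leq \lambda\|\widetilde{x}\|_1^2/\|\widetilde{x}\|_2^2 + q(A\widetilde{x}-b) \leq \lambda\|\widetilde{x}\|_0 + q(A\widetilde{x}-b) < \lambda\, m/s + q(A\widetilde{x}-b)$ contradicts the previous lower bound. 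Hence $\{x^k\}$ must be bounded and Lemma \ref{lem:1} again concludes.

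The hard part is the unboundedness argument in case \ref{en:2.2}: it demands careful orchestration of all three hypotheses, because coercivity is what forces $u^{\star}$ into $\ker A$, the spherical section property is what lifts the limiting ratio above $m/s$, and the strict inequality $\|\widetilde{x}\|_0<m/s$ is what creates the strict gap needed for a contradiction. A weakening to $\|\widetilde{x}\|_0\leq m/s$ would only yield equality and leave unbounded minimizing sequences as a live possibility, so strictness is essential.
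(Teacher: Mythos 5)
Your proof is correct and follows essentially the same route as the paper: argue by contradiction that a minimizing sequence is bounded, extract a unit null-space direction from a normalized subsequence, and play the spherical-section lower bound $m/s$ on the limiting ratio against the Cauchy--Schwarz upper bound $\|\widetilde{x}\|_0 < m/s$ at the feasible point $\widetilde{x}$. The only cosmetic difference is that you normalize by $\|x^k\|_2$ and use $Au^k \to 0$ (from coercivity of $q$) directly, whereas the paper first decomposes $x^k$ into its $\ker A$ and $\operatorname{range} A^{T}$ components and normalizes by the kernel part; both produce the same limit direction in $\ker A$ and the same contradiction.
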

    \begin{proof}
    	Item \ref{en:2.1} is a direct consequence of Lemma \ref{lem:1} and the compactness of $\mathcal{X}$. We next focus on Item \ref{en:2.2}. Let $\{x^k : k \in \mathbb{N} \}$ be a minimizing sequence of problem \eqref{eq:problem1}. In view of Lemma \ref{lem:1}, we can prove this theorem by showing that $\{x^k : k \in \mathbb{N} \}$ is bounded. We next show this result by contradiction.\\
    	\indent Suppose $\{x^k : k \in \mathbb{N} \}$ is unbounded. Since $\frac{\|\cdot\|_1^2}{\|\cdot\|_1^2}$ is bounded, without loss of generality, we assume $ \lim_{k \to \infty} \frac{\lambda\|x^k\|_1^2}{\|x^k\|_2^2}= \nu_{1}^{\star}$. Then, $\lim_{k \to \infty}q(Ax^k - b) = \nu^{\star} - \nu_{1}^{\star}$. Let $x^k$ be decomposed as, for $k \in \mathbb{N}$, $x^k = x_{1}^k + x_{2}^k$ with $x_{1}^k \in \operatorname{ker} A$ and $x_{2}^k \in \operatorname{range} \, A^{T}$. The coercivity of $q$ indicates that $\{x_{2}^k : k \in \mathbb{N} \}$ is bounded. Then $\{x_{1}^k : k \in \mathbb{N} \}$ is unbounded, since $\{x^k:k \in \mathbb{N} \}$ is unbounded. 
    	Let $\tilde{x}^k := \frac{x^k}{\|x_{1}^k\|_2}$ for $k \in \mathbb{N}$. Then, there exists $\{\tilde{x}^{k_{j}} : j \in \mathbb{N} \}$ such that  $ \lim_{j \to \infty}\tilde{x}^{k_{j}} = \lim_{j \to \infty} \frac{x_{1}^{k_{j}}}{\|x_{1}^{k_{j}}\|_2} = d$, with $\|d\|_2 = 1$ and $d \in \operatorname{ker} A$. Then, 
    	\begin{equation}
    		\underset{j \to \infty}{\lim} \frac{\|x^{k_{j}}\|_1^2}{\|x^{k_{j}}\|_2^2} = \underset{j \to \infty}{\lim} \frac{\|\tilde{x}^{k_{j}}\|_1^2}{\|\tilde{x}^{k_{j}}\|_2^2} = \frac{\|d\|_1^2}{\|d\|_2^2}.
    		\label{equa:2.6}
    	\end{equation}
    	Since $\operatorname{ker} A$ has the $s$-spherical section property, it follows that 
    	\begin{equation}
    		\frac{\|d\|_1^2}{\|d\|_2^2} \geq \frac{m}{s} \overset{(a)}{>} \|\tilde{x}\|_0 \overset{(b)}{\geq} \frac{\|\tilde{x}\|_1^2}{\|\tilde{x}\|_2^2},
    		\label{equa:2.7}
    	\end{equation}
    	where (a) holds by our assumption and (b) follows from the Cauchy-Schwarz inequality. On the other hand, $\tilde{x} \in \argmin_{x \in \mathcal{X}}\,q(Ax - b)$. This together with \eqref{equa:2.6} and \eqref{equa:2.7} yields
    	\[
    	\frac{\lambda\|\tilde{x}\|_1^2}{\|\tilde{x}\|_2^2} + q(A\tilde{x} - b) < \underset{j \to \infty}{\lim} \frac{\lambda\|x^{k_{j}}\|_1^2}{\|x^{k_{j}}\|_2^2} + q(Ax^{k_{j}} - b).
    	\]
    	This contradicts to the fact that $\{x^k : k \in \mathbb{N} \}$ is the minimizing sequence of problem \eqref{eq:problem1}. We then complete the proof.
    \end{proof}
    \indent Finally, we establish the solution existence of the concrete models \eqref{eq:1.2}, \eqref{eq:1.3} and \eqref{eq:1.4} with the help of Theorem \ref{the:2.1}. The results are presented in the following corollary.
    \begin{corollary}
    	The following statements hold:
    	\begin{enumerate}[label=\textup{(\roman*)}]
    		\item If $\mathcal{X}$ is bounded,   then the set of optimal solutions to each of models \eqref{eq:1.2}, \eqref{eq:1.3} and \eqref{eq:1.4} is  nonempty;\label{cor:2.2.1}
    		\item If $\mathrm{ker} A$ has the $s$-spherical section property for some $s>0$ and there exists $\widetilde{x} \in\mathbb{R}^n$ such that $0\neq\widetilde{x} \in \argmin_{x\in\mathcal{X}}\,q(Ax-b)$ and $\|\widetilde{x}\|_0\leq m/s$, then the sets of optimal solutions to each of models \eqref{eq:1.2} and \eqref{eq:1.3} is nonempty.\label{cor:2.2.2}
    	\end{enumerate} 	
    \end{corollary}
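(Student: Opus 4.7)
The plan is to invoke Theorem \ref{the:2.1} directly, after recalling that each of the three concrete models has been shown in the introduction to be a special instance of the general fractional model \eqref{eq:problem1}. For part \ref{cor:2.2.1} no further work is needed: the boundedness of $\mathcal{X}$ immediately triggers Theorem \ref{the:2.1}\ref{en:2.1} for all three models simultaneously, regardless of the specific form of the loss $q$.

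For part \ref{cor:2.2.2}, the hypotheses on $\ker A$ and on the existence of $\widetilde{x}$ already match those of Theorem \ref{the:2.1}\ref{en:2.2}, so the only remaining ingredient to check is the coercivity of the loss function $q$ for models \eqref{eq:1.2} and \eqref{eq:1.3}. For \eqref{eq:1.2}, $q(y)=\tfrac{1}{2}\|y\|_2^2$ is trivially coercive. For \eqref{eq:1.3}, I would establish coercivity of the Lorentzian norm via the chain
\[
\|y\|_{LL_2,\gamma}=\sum_{i=1}^m\log\bigl(1+\gamma^{-2}y_i^2\bigr)\geq \log\bigl(1+\gamma^{-2}\|y\|_\infty^2\bigr)\geq \log\bigl(1+\gamma^{-2}\|y\|_2^2/m\bigr),
\]
which tends to $+\infty$ as $\|y\|_2\to\infty$. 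With coercivity in hand, Theorem \ref{the:2.1}\ref{en:2.2} delivers the conclusion for both models.

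The reason \eqref{eq:1.4} is deliberately excluded from part \ref{cor:2.2.2} is that $q(y)=\tfrac{1}{2}\mathrm{dist}^2(y,S_r)$ vanishes on the unbounded sparse set $S_r$ and is therefore not coercive, so Theorem \ref{the:2.1}\ref{en:2.2} cannot be applied and a separate argument would be required for unbounded $\mathcal{X}$ in that setting. The main (minor) obstacle I foresee is the mild gap between the strict inequality $\|\widetilde{x}\|_0<m/s$ assumed in Theorem \ref{the:2.1}\ref{en:2.2} and the non-strict inequality $\|\widetilde{x}\|_0\leq m/s$ stated in the corollary; handling the boundary case $\|\widetilde{x}\|_0 = m/s$ requires either a small perturbation of $\widetilde{x}$ or a closer inspection of the Cauchy--Schwarz step in the proof of Theorem \ref{the:2.1}, which is strict unless the nonzero entries of $\widetilde{x}$ all share a common magnitude. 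Aside from this minor reconciliation, the proof is a straightforward hypothesis-checking exercise.
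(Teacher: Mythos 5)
Your proposal matches the paper's own (one-line) proof exactly: part \ref{cor:2.2.1} is read off from Theorem \ref{the:2.1}\ref{en:2.1}, and part \ref{cor:2.2.2} from Theorem \ref{the:2.1}\ref{en:2.2} together with the coercivity of $q$ in models \eqref{eq:1.2} and \eqref{eq:1.3}, which you additionally verify explicitly for the Lorentzian norm. You are also right to flag the mismatch between the strict inequality $\|\widetilde{x}\|_0<m/s$ in Theorem \ref{the:2.1}\ref{en:2.2} and the non-strict $\|\widetilde{x}\|_0\leq m/s$ in the corollary; the paper's proof silently ignores this (almost certainly a typo in one of the two statements), so your observation is a genuine catch rather than a defect in your argument.
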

    \begin{proof}
    	Item \ref{cor:2.2.1} follows from Theorem \ref{the:2.1} \ref{en:2.1}. Item \ref{cor:2.2.2} holds thanks to Theorem \ref{the:2.1} \ref{en:2.2} and the coerceivity of $q$ in problems \eqref{eq:1.2} and \eqref{eq:1.3}.
    \end{proof}	 
    \section{A proximal algorithm for solving \eqref{eq:problem1}} \label{sec4}
    \indent In this section, we study the first order optimality condition for problem \eqref{eq:problem1}, and accordingly propose an efficient proximal algorithm with rigorous subsequential convergence analysis. We shall consider a more general setting, where the underlying problem is a fractional program in the form of
    \begin{equation}\label{eq:problem2}
    	\underset{x\in\mathbb{R}^n}{\min} \; \left\{\frac{f^2(x)}{g(x)}+h_1(x)-h_2(x) : x\in C\cap\Omega\right\}.	
    \end{equation}
    Here, $f, g:\mathbb{R}^n\to[0,+\infty)$, and $h_1 , h_2:\mathbb{R}^n\to\mathbb{R}$ are proper closed, $C\subseteq\mathbb{R}^n$ is closed convex and $C\cap\Omega \neq \varnothing$ with $\Omega:=\{x\in\mathbb{R}^n:g(x)\neq0\}$. In addition, we adopt the following blanket assumption for problem \eqref{equation1.1}.
    \begin{assumption}\quad\label{assu:3.1}
    	\begin{enumerate}[label=\textup{(\roman*)},labelwidth=2em, leftmargin=*, align=left, itemsep=0pt]
    		\item $f$ is convex and the proximal operator associated with $f+\iota_C$ can be evaluated.\label{item:1}
    		\item $g$ is locally Lipschitz differentiable on $\mathbb{R}^n$.\label{item:2}
    		\item $h_1$ is locally Lipschitz differentiable on $\mathbb{R}^n$ and $h_2$ is convex.\label{item:3}		
    	\end{enumerate}
    \end{assumption}
    Clearly, model \eqref{eq:problem1} is a special case of \eqref{eq:problem2} with $f(x)=\sqrt{\lambda}\|x\|_1$, $g(x)=\|x\|_2^2$, $h_1(x)=q_1(Ax-b)$,  $h_2(x)=q_2(Ax-b)$, $C=\mathcal{X}$ and $\Omega=\{x\in\mathbb{R}^n:x\neq0\}$. For convenience, we introduce the extended objective $F:\mathbb{R}^n\to (-\infty,+\infty]$ for problem \eqref{eq:problem2}, which is defined at $x\in \mathbb{R}^n$ by 
    \begin{equation}
    	F(x):=
    	\begin{cases}
    		\frac{f^2(x)}{g(x)} + h_1(x) - h_2(x), & \text{if} \quad x \in \Omega \cap C,\\
    		+\infty, & \text{else}.
    	\end{cases}       
    \end{equation}\label{eq:1.5}
    Then problem \eqref{eq:problem2} is equivalent to minimizing $F$ over $\mathbb{R}^n$.
    \subsection{First-order optimality condition and the proposed algorithm}\label{subsection 4.1}
    \indent We first present a first order necessary optimality condition for problem \eqref{eq:problem2}.
    \begin{theorem}
    	If $x^{\star}$ is a local minimizer of problem \eqref{eq:problem2}, then it holds with $c_{\star}=f(x^{\star})/g(x^{\star})$ that 
    	\begin{equation}
    		0 \in \partial(2c_{\star}f+\iota_{C})(x^{\star})- c_{\star}^2\nabla g(x^{\star}) + \nabla h_1(x^{\star})-\partial h_2(x^{\star}).
    		\label{eq:problem9}
    	\end{equation}
    	\label{theor:3.2}
    \end{theorem}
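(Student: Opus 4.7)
My plan is to apply Fermat's rule at $x^{\star}$, after linearizing the nonsmooth fractional term $f^{2}/g$ at $x^{\star}$, and then to extract the stated inclusion through a convex-subgradient argument for the DC-type piece $-h_{2}$. Since $x^{\star}\in\Omega$ and $\Omega$ is open, in a neighborhood of $x^{\star}$ the extended objective $F$ coincides with $G+h_{1}-h_{2}+\iota_{C}$, where I write $G:=f^{2}/g$. Local minimality together with Fermat's rule therefore yields $0\in\widehat{\partial}(G+h_{1}-h_{2}+\iota_{C})(x^{\star})$. The central idea is to introduce the auxiliary linearization $\psi(x):=2c_{\star}f(x)-c_{\star}^{2}g(x)$, which will replace $G$ at the subdifferential level once one shows that $G-\psi$ is Fr\'echet differentiable at $x^{\star}$ with vanishing gradient.

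The key technical step is exactly this Fr\'echet differentiability. A direct algebraic simplification gives
\[
G(x)-\psi(x)=\frac{\bigl(f(x)-c_{\star}g(x)\bigr)^{2}}{g(x)},
\]
and the definition $c_{\star}=f(x^{\star})/g(x^{\star})$ forces the numerator to vanish at $x^{\star}$. Since $f$ is real-valued convex, hence locally Lipschitz on $\mathbb{R}^{n}$, and $g$ is smooth with $g(x^{\star})>0$, the map $x\mapsto f(x)-c_{\star}g(x)$ is locally Lipschitz at $x^{\star}$ while $g$ is bounded below by a positive constant nearby. Consequently $G(x)-\psi(x)=O(\|x-x^{\star}\|_{2}^{2})$, which gives Fr\'echet differentiability at $x^{\star}$ with zero gradient.

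With this in hand, the remainder is subdifferential calculus. For any $y\in\partial h_{2}(x^{\star})$ (nonempty since $h_{2}$ is continuous convex), the convex subgradient inequality for $h_{2}$ combined with the local minimality of $x^{\star}$ shows that $x^{\star}$ is a local minimizer of $x\mapsto(G+h_{1}+\iota_{C})(x)-\langle y,x\rangle$; Fermat's rule then gives $y\in\widehat{\partial}(G+h_{1}+\iota_{C})(x^{\star})$. I would then write
\[
G+h_{1}+\iota_{C}=(2c_{\star}f+\iota_{C})+\bigl[(G-\psi)+(h_{1}-c_{\star}^{2}g)\bigr]
\]
and invoke the equality version of the Fr\'echet subdifferential sum rule (which the paper has recalled), valid because the bracketed term is Fr\'echet differentiable at $x^{\star}$ with gradient $\nabla h_{1}(x^{\star})-c_{\star}^{2}\nabla g(x^{\star})$. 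This yields $y\in\partial(2c_{\star}f+\iota_{C})(x^{\star})+\nabla h_{1}(x^{\star})-c_{\star}^{2}\nabla g(x^{\star})$, which rearranges into the claimed inclusion.

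The hard part will be the first step: justifying that $G-\psi$ is Fr\'echet differentiable at $x^{\star}$ with vanishing gradient. The factorization above makes this transparent once one recognizes that $f(x^{\star})=c_{\star}g(x^{\star})$, but care is needed to combine the local Lipschitzness of $f$ with the smoothness and positivity of $g$ to obtain the $O(\|\cdot\|_{2}^{2})$ bound. A secondary delicacy is the DC-type treatment of $-h_{2}$, where the convex subgradient inequality converts the problematic piece into a linear perturbation so that the clean sum-rule calculation above applies.
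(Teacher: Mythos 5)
Your proposal is correct and follows essentially the same route as the paper: the central device --- replacing $f^2/g$ by its linearization $2c_{\star}f - c_{\star}^2 g$ after showing that the difference $(f - c_{\star}g)^2/g$ is Fr\'echet differentiable at $x^{\star}$ with vanishing gradient, using the local Lipschitzness of $f - c_{\star}g$ and the positivity of $g$ near $x^{\star}$ --- is exactly the paper's argument, as is the use of openness of $\Omega$ to discard $\iota_{\Omega}$. The only (harmless) deviation is in handling $-h_2$: you convert it into a linear perturbation $-\langle y,\cdot\rangle$ via the convex subgradient inequality before applying Fermat's rule, whereas the paper invokes a Fr\'echet-subdifferential difference rule $\widehat{\partial}(\phi - h_2)(x^{\star}) \subseteq \widehat{\partial}\phi(x^{\star}) - \partial h_2(x^{\star})$; both yield the same inclusion.
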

    \begin{proof}
    	Due to the continuity of $g$ around $x^{\star} \in \Omega$, there exists $\delta>0$ such that $g(x)>0$ for all $x\in \mathcal{B}(x^{\star},\delta)$. Set $\varphi(x):=f^2(x)/g(x)-2c_{\star}f(x)+c_{\star}^2g(x)$. We next show that $\varphi$ is differentiable at $x^{\star}$ and $\nabla \varphi(x^{\star}) = 0$. We obtain from $c_{\star} = f(x^{\star})/g(x^{\star})$ that, for $x\in \mathcal{B}(x^{\star},\delta)$,
    	\begin{equation}
    		\varphi(x) - \varphi(x^{\star})= \frac{f^2(x)}{g(x)}-2c_{\star}f(x)+c_{\star}^2g(x)
    		=\frac{(f(x)-c_{\star}g(x))^2}{g(x)}.
    		\label{eq:pro8}
    	\end{equation} 
    	In view of Assumption \ref{assu:3.1} \ref{item:2}, $g$ is locally Lipschitz continuous. This together with Assumption \ref{assu:3.1} \ref{item:1} and $c_{\star}=f(x^{\star})/g(x^{\star})$ yields that $\|f(x)-c_{\star}g(x)\|_2 = \|f(x)-c_{\star}g(x)-(f(x^{\star})-c_{\star}g(x^{\star}))\|_2 \leq a\|x-x^{\star}\|_2$ for some $a>0$. Invoking this and \eqref{eq:pro8} we obtain that $\nabla \varphi(x^{\star})=0$. 
    	\indent Then, we deduce that 
    	 \[
    		\begin{aligned}
    		\widehat{\partial}F(x^{\star})
    		&\stackrel{(a)}{=} \widehat{\partial}(F-\varphi)(x^{\star})\\
    		& = \widehat{\partial}(2c_{\star}f-c_{\star}^2g +h_1 -h_2+\iota_C+\iota_{\Omega})(x^{\star})\\
    		&\stackrel{(b)}{=} \widehat{\partial}(2c_{\star}f+\iota_{C}+\iota_{\Omega}-h_2)(x^{\star})- c_{\star}^2\nabla g(x^{\star}) + \nabla h_1(x^{\star})\\
    		&\stackrel{(c)}{\subseteq}\widehat{\partial}(2c_{\star}f+\iota_{C}+\iota_{\Omega})(x^{\star})- \partial h_2(x^{\star}) -c_{\star}^2\nabla g(x^{\star}) + \nabla h_1(x^{\star})\\
    		&\stackrel{(d)}{=}\partial(2c_{\star}f+\iota_{C})(x^{\star})- \partial h_2(x^{\star}) -c_{\star}^2\nabla g(x^{\star}) + \nabla h_1(x^{\star}),
    	\end{aligned}
    	\]
    	where (a) is from $\nabla\varphi(x^{\star})=0$, (b) follows from the differentiability of $g$ and $h_1$, (c) comes from \cite[Theorem 3.1]{mordukhovich2006frechet} and (d) is form the fact that $\Omega$ is an open subset. This together with the generalized Fermat's rule implies \eqref{eq:problem9} immediately.
    \end{proof}
    Inspired by Theorem \ref{theor:3.2}, we define a critical point of $F$ as follows.
     \begin{definition}\label{de:3.1}
    	We say $x^{\star} \in \Omega \cap C$ is a critical point of F, if \eqref{eq:problem9} holds with $c_{\star}=f(x^{\star})/g(x^{\star})$.
    \end{definition}   
    Recall that for a proper lower semicontinuous convex function $\varphi:\mathbb{R}^n\to(-\infty,+\infty]$, the proximity operator of $\varphi$, denoted by $\text{prox}_{\varphi}$, defined at $x\in\mathbb{R}^n$ as
    \[
     \text{prox}_{\varphi}(x):=\text{arg}\,\text{min}\left\{\varphi(x)+\frac{1}{2}\|x-y\|_2^2:y\in\mathbb{R}\right\}.
    \]
    It is direct to see that $x^{\star}$ is a critical point of F, if and only if $x^{\star}$ satisfies 
    \begin{equation}
    	x^{\star} \in \mathrm{prox}_{2\alpha c_{\star}f+\iota_{C}} ( x^{\star} - \alpha(\nabla h_1(x^{\star})-c_{\star}^2\nabla g^{\star}-z^{\star}) ),\label{eq:problem4.1}
    \end{equation}
    for some $\alpha>0$, $z^{\star}\in \partial h_2(x^{\star})$ and $c_{\star}=f(x^{\star})/g(x^{\star})$. Motivated by this observation, we propose a proximal algorithm for solving \eqref{subsection 4.1}, which is formally stated in Algorithm \ref{al:1}. Notably, Step 2 of Algorithm \ref{al:1} incorporates a line search scheme to determine a suitable stepsize $\alpha$, where the auxiliary function $ H: \mathbb{R}^n \times \mathbb{R}^n \times \mathbb{R} \to (-\infty,+\infty] $ is defined as 
    \begin{equation}
    	H(x,z,c):=2cf(x)+\iota_{C}(x)-c^2g(x) +h_1(x)+h_2^{\star}(z)-\langle x,z\rangle.
    	\label{equation:4.2} 
    \end{equation}
    Before proceeding, we make some interpretations on this line search scheme. Using \eqref{equation:4.2}, Fenchel-Young equality and the fact that $z^k\in \partial h_2(x^k)$, it is easy to verify that
    \[
      H(\widehat{x}^k,z^k,f(\widehat{x}^k)/g(\widehat{x}^k))=\frac{f^2(\widehat{x}^k)}{g(\widehat{x}^k)}+\iota_C(\widehat{x}^k)+h_1(\widehat{x}^k)-h_2(x^k)-\langle\widehat{x}^k-x^k,z^k \rangle.    
    \]
    Hence, in practice we do not need to compute the term $h_2^{\star}(z^k)$ in the line search scheme. Moreover, in view of the Fenchel-Young inequality, for all $x\in\Omega\cup C$, $z\in \text{dom} h_2^{\star}$ and $c_x=f(x)/g(x)$, it holds that 
    \begin{equation}\label{equat:4.7}
      F(x)\leq H(x,z,c_x).
    \end{equation}
    Therefore, the condition \eqref{eq:problem4.4} in the line search scheme implies the sufficient descent of F, i.e.,
    \begin{equation}\label{equa4.6}
    	F(\widetilde{x}^k)+\frac{\sigma}{2}\|\widetilde{x}^k-x^k\|_2^2\leq F(x^k).
    \end{equation}
    Although the subsequential convergence can be still guaranteed if the terminating condition \eqref{eq:problem4.4} in the Algorithm \ref{al:1} is replaced by \eqref{equa4.6}, it is generally difficult to show the sequential convergence of this altered algorithm under the KL assumption on F. In contrast, we can establish the sequential convergence of Algorithm \ref{al:1} equipped with line-search condition \eqref{eq:problem4.4} under the KL assumption on $H$, as it will be shown in section \ref{sec6}.\\
    \indent Now we specialize Algorithm \ref{al:1} to model \eqref{eq:problem1} and the resulting algorithm is presented in Algorithm \ref{al:2}. We remark that each iteration of Algorithm \ref{al:2} can be computed very efficiently. 
    \begin{algorithm}[H]
    	\caption{A fractional optimization method for solving problem \eqref{eq:problem2}}
    	\label{al:1}
    	\begin{algorithmic}[H]
    		\State \textbf{Step 0.} Input: $x^0 \in \text{dom}(F)$, $0 < \underline{\alpha} < \overline{\alpha}$, $\sigma > 0$, $0 < r < 1$, and set $k \leftarrow 0$.
    		
    		\State \textbf{Step 1.} Compute $c_k=f(x^k)/g(x^k)$.
    		\State \hspace{3.7em} Choose $z^k \in \partial h_2(x^k)$.
    		\State \hspace{3.7em} Set $\alpha:=\widetilde{\alpha}_k\in [\underline{\alpha}, \overline{\alpha}]$.
    		
    		\State \textbf{Step 2.} Compute
    		\begin{equation}
    			\widehat{x}^{k} \in \mathrm{prox}_{2\alpha c_{k}f+\iota_{C}} ( x^{k} - \alpha(\nabla h_1(x^k)-c_{k}^2\nabla{g(x^k)}-z^{k}) ).\label{eq:problem4.3}
    		\end{equation}
    		\State \hspace{3.7em} If $\widehat{x}^{k} \in \Omega$ and satisfies
    		\begin{equation}
    			H(\widehat{x}^{k},z^k,f(\widehat{x}^{k})/g(\widehat{x}^{k})) +\frac{\sigma}{2}\|\widehat{x}^k-x^k\|_2^2 \leq F(x^k);
    			\label{eq:problem4.4}
    		\end{equation}
    		\State \hspace{3.7em} Then, set $x^{k+1}=\widehat{x}^{k}$ and go to \textbf{Step 3};
    		\State \hspace{3.7em} Else, set $\alpha:=\alpha r$ and repeat \textbf{Step 2}.
    		\State \textbf{Step 3.} Record $\alpha_k:=\alpha$, set $k \leftarrow k+1$, and go to \textbf{Step 1}.
    	\end{algorithmic}
    \end{algorithm}
    \begin{algorithm}[H]
    	\caption{The specialization of Algorithm \ref{al:1} to model \eqref{eq:problem1}}
    	\label{al:2}
    	\begin{algorithmic}[H]
    		\State \textbf{Step 0.} Input: $x^0 \neq 0_n$, $0 < \underline{\alpha} < \overline{\alpha}$, $\sigma > 0$, $0 < r < 1$, and set $k \leftarrow 0$.
    		
    		\State \textbf{Step 1.} Compute $c_k=\sqrt{\lambda}\|x^k\|_1/\|x^k\|_2^2$.
    		\State \hspace{3.7em} Choose $z^k \in A^T\partial q_2(Ax^k-b)$.
    		\State \hspace{3.7em} Set $\alpha:=\widetilde{\alpha}_k\in [\underline{\alpha}, \overline{\alpha}]$.
    		
    		\State \textbf{Step 2.} Compute
    		\[
    		\widehat{x}^{k} \in \mathrm{prox}_{2\alpha c_{k}\sqrt{\lambda}\|\cdot\|_1+\iota_{\mathcal{X}}} ( x^{k} - \alpha A^T\nabla q_1(Ax^k-b)+2\alpha c_k^2 x^k +\alpha z^{k} ).
    		\]
    		\State \hspace{3.7em} If $\widehat{x}^{k} \neq 0$ and satisfies
    		\[
    		\begin{aligned}
    			\lambda\frac{\|\widehat{x}^k\|_1^2}{\|\widehat{x}^k\|_2} &+q_1(A\widehat{x}^k-b)-\left\langle\widehat{x}^k-x^k,z^k\right\rangle+\frac{\sigma}{2}\|\widehat{x}^k-x^k\|_2^2\\ 
    			&\leq \lambda\frac{\|x^k\|_1^2}{\|x^k\|_2}+q_1(Ax^k-b);
    		\end{aligned}
    		\]
    		\State \hspace{3.7em} Then, set $x^{k+1}=\widehat{x}^{k}$ and go to \textbf{Step 3};
    		\State \hspace{3.7em} Else, set $\alpha:=\alpha r$ and repeat \textbf{Step 2}.
    		\State \textbf{Step 3.} Record $\alpha_k:=\alpha$, set $k \leftarrow k+1$, and go to \textbf{Step 1}.
    	\end{algorithmic}
    \end{algorithm}
    \noindent In particular, the proximal operator associated with the function $2\alpha c_k \sqrt{\lambda}\|\cdot\|_1+\iota_{\mathcal{X}}$  admits a simple closed form solution. Let $\mathcal{X}:=\{x\in \mathbb{R}^n:\underline{x}\leq x\leq\overline{x}\}$ for some $\underline{x}\in \mathbb{R}^n \cup \{-\infty\}^n$ and $\overline{x}\in \mathbb{R}^n \cup \{\infty\}^n$. Then, it is easy to verify that for $j=1,2,\dots,n$,
    \[
      \left(\text{prox}_{2\alpha c_k \sqrt{\lambda}\|\cdot\|_1+\iota_{\mathcal{X}}}(z)\right)_j=
      \max\left\{\min\left\{\text{sign}(z_j) \max\left\{0,|z_j|-2\alpha c_k \sqrt{\lambda}\right\},\bar{x}_j\right\},\underline{x}_j\right\}.
    \]
    \subsection{The well-definedness of Algorithm \ref{al:1}}
    \indent We establish the well-definedness of Algorithm \ref{al:1} in this subsection, that is, Step 2 of Algorithm \ref{al:1} can be terminated in finite steps. To this end, we first introduce the following basic assumption concerning its initial point.
    \begin{assumption}
    	The level set $\mathcal{X}_0:=\left\{x\in \mathrm{dom}(F):F(x) \leq F(x^0) \right\}$ is compact. \label{ass:4.1}
    \end{assumption}
    \indent We point out that the boundedness of the level set of $F$ is a very standard assumption in nonconvex optimization and will be satisfied if $F$ is coerceive. The closedness of the level set of $F$ automatically holds if the extended objective $F$ is closed. Nevertheless, the $F$ defined in \eqref{eq:1.5} may be not closed due to the non-closedness of $\Omega$. In view of \cite[Proposition 4.4]{zhang2022first} and the closedness of $h_1, h_2$ and $C$, we see that $F$ is closed on $\mathbb{R}^n$ if $f$ and $g$ do not attain zero simultaneously. Unfortunately, this requirement is not satisfied for problem \eqref{eq:problem1}. In this case, we can still guarantee the closedness of $\mathcal{X}_0$ via appropriately selecting $x^0$, as shown in the following proposition.
    \begin{proposition}
    	Suppose that S:=$\left\{x\in\mathbb{R}^n:f(x)=g(x)=0\right\}\neq\emptyset$. Then, $\mathcal{X}_0$ is closed if $x^0\in \mathrm{dom}F$ satisfies
    	\begin{equation}
    		F(x^0) < \inf \left\{\underset{z \to x}{\liminf}F(z):x\in S\right\}. \label{eq.4.2}
    	\end{equation} 
    	\label{prop.4.2}
    \end{proposition}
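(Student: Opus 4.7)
The plan is to verify the sequential closedness of $\mathcal{X}_0$. I would take an arbitrary sequence $\{x^k\}\subseteq\mathcal{X}_0$ with $x^k\to x^{\star}$ and show $x^{\star}\in\mathcal{X}_0$. This splits into verifying (i) $x^{\star}\in\mathrm{dom}(F)=\Omega\cap C$ and (ii) $F(x^{\star})\le F(x^0)$. Closedness of $C$ yields $x^{\star}\in C$ immediately, so (i) reduces to showing $g(x^{\star})\neq 0$, and this is precisely where the hypothesis \eqref{eq.4.2} will be invoked.

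To prove $g(x^{\star})\neq 0$, I would argue by contradiction, supposing $g(x^{\star})=0$ and splitting into two subcases. If $f(x^{\star})>0$, then continuity of $f$ and $g$ gives $f(x^k)^2\to f(x^{\star})^2>0$ while $g(x^k)\to 0^+$ (each $g(x^k)>0$ because $x^k\in\Omega$ and $g\ge 0$), so $f^2(x^k)/g(x^k)\to +\infty$; combined with the finite limit of the continuous function $h_1-h_2$, this forces $F(x^k)\to +\infty$, contradicting $F(x^k)\le F(x^0)$. If instead $f(x^{\star})=0$, then $x^{\star}\in S$, and since $g(x^{\star})=0$ means $x^{\star}\notin\mathrm{dom}(F)$, necessarily $x^k\neq x^{\star}$ for all $k$, so
\[
\inf\Big\{\liminf_{z\to x}F(z):x\in S\Big\}\le\liminf_{z\to x^{\star}}F(z)\le\liminf_{k\to\infty}F(x^k)\le F(x^0),
\]
which directly contradicts \eqref{eq.4.2}. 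Hence $x^{\star}\in\Omega$ and (i) holds.

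For (ii), once $g(x^{\star})>0$ is established, the functions $f^2/g$, $h_1$, and $h_2$ are each continuous at $x^{\star}$ (using Assumption \ref{assu:3.1} together with the fact that real-valued convex functions on $\mathbb{R}^n$ are continuous, which covers $f$ and $h_2$), so $F(x^k)\to F(x^{\star})$ and therefore $F(x^{\star})\le F(x^0)$, completing the argument. The main obstacle I anticipate is the blow-up step in the first subcase: making it airtight requires the observation that $x^k\in\mathrm{dom}(F)\subseteq\Omega$ combined with $g\ge 0$ forces $g(x^k)>0$ strictly along the whole sequence, so that $f^2(x^k)/g(x^k)$ actually diverges to $+\infty$ rather than oscillating. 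Hypothesis \eqref{eq.4.2} itself is only needed in the second subcase, at points where both $f$ and $g$ simultaneously vanish, which is exactly the failure mode of closedness that Proposition \ref{prop.4.2} is designed to rule out.
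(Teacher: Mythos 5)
Your proof is correct and follows essentially the same route as the paper: sequential closedness of $\mathcal{X}_0$, with hypothesis \eqref{eq.4.2} invoked exactly at limit points lying in $S$. The only cosmetic difference is that where you split into the subcases $f(x^{\star})>0$ (blow-up of $f^2/g$ along the sequence) and $f(x^{\star})=0$, the paper multiplies the level-set inequality through by $g(x^k)$ and passes to the limit, which forces $f(x^{\star})=0$ directly and merges your two subcases into one step.
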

    \begin{proof}
    	We prove this proposition by contradiction. Suppose there exists $\{x^k : k \in \mathbb{N}\} \subseteq \mathcal{X}_0$ such that $\lim_{k \to \infty}x^k=x^{\star}$ with $x^{\star} \notin \mathcal{X}_0$. Due to the continuity of $F$ on $\mathrm{dom}F$ and the closedness of $\mathcal{X}$, we deduce that $x^{\star} \notin \Omega$, that is $g(x^{\star}) = 0$. Since $x^k \in \mathcal{X}_0$, we have for any $k \in \mathbb{N}$,
    	\begin{equation}
    		f^2(x^k)+(h_1(x^k)-h_2(x^k))g(x^k)\leq F(x^0)g(x^k).
    		\label{eq.4.3}
    	\end{equation}
    	Passing to the limit on both sides of the above inequality with $k\to \infty$, we derive from the continuity of $f, h_1, h_2$ and $g$ that 
    	\[
    	f^2(x^{\star})+(h_1(x^{\star})-h_2(x^{\star}))g(x^{\star})\leq F(x^0)g(x^{\star}).
    	\]
    	The above inequality yields that $f(x^{\star}) = 0$ from the fact that $g(x^{\star}) = 0$, which indicates that $x^{\star} \in S$. This means that there exists $\{x^k : k \in \mathbb{N}\} \subseteq \mathcal{X}_0$ such that $\lim_{k \to \infty}x^k=x^{\star} \in S$. Thus, we have $\inf \left\{\liminf_{z \to x}F(z):x\in S\right\} \leq \liminf_{k \to \infty}F(x^k)\leq F(x^0)$, where the last inequality comes from $F(x^k) \leq F(x^0)$ for $k\in \mathbb{N}$. This contradicts to \eqref{eq.4.2}. Then we immediately complete the proof.
    \end{proof}
    \indent We remark that for problem \eqref{eq:problem1}, the zero vector is the unique point at which both $f$ and $g$ vanish. In view of Proposition \ref{prop.4.2} and \eqref{eqt3.1}, we obtain that for model \eqref{eq:problem1} if $0 \neq x^0 \in \mathcal{X}$ satisfies
    \begin{equation}\label{eq4.12}
    	F(x^0) < \lambda+q(-b),
    \end{equation}
    then the level set $\mathcal{X}_0$ is closed. The boundedness of $\mathcal{X}_0$ can be guaranteed if $\mathcal{X}$ is bounded.\\
    \indent Now, with the help of Assumption \ref{ass:4.1}, we can establish the following technical lemma, which will be frequently used in our convergence analysis.
    \begin{lemma} 
    	Suppose that Assumption \ref{ass:4.1} holds. Then, we have $m_g:=\inf\left\{g(x):x\in \mathcal{X}_0 \right\} >0$. Moreover, there exists $\Delta>0$ such that the following statements hold with the set $\mathcal{X}_{\Delta}$ defined by
    	\[
    	\mathcal{X}_{\Delta}:=\left\{x\in \mathbb{R}^n:\mathrm{dist}(x,\mathcal{X}_0) \leq \Delta\right\},
    	\]
    	\begin{enumerate}[label=\textup{(\roman*)}]
    		\item For any $x \in \mathcal{X}_{\Delta}$, there holds $g(x) \geq m_g/2$;\label{lem:4.1}
    		\item $f$, $g$, $f/g$, $\nabla g$ and $\nabla h_1$ are globally Lipschitz continuous on $\mathcal{X}_{\Delta}$;\label{lem:4.2}
    		\item The quantities defined below are finite:\\
    		$
    		M_{\nabla g}:=\sup\left\{\|\nabla g(x)\|_2:x \in \mathcal{X}_{\Delta}\right\},
    		$\\
    		$
    		M_g:=\sup\left\{g(x):x \in \mathcal{X}_{\Delta}\right\},
    		M_{f/g}:=\sup\left\{f(x)/g(x):x \in \mathcal{X}_{\Delta}\right\}.
    		$
    		\label{lem:4.3} 
    	\end{enumerate}
    	\label{lemm:4.1}
    \end{lemma}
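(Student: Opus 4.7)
The plan is to first establish a uniform positive lower bound $m_g>0$ on $g$ over $\mathcal{X}_0$, then enlarge $\mathcal{X}_0$ by a small distance $\Delta$ while preserving a weakened lower bound, and finally exploit the compactness of the enlargement to extract all the Lipschitz and boundedness conclusions. Since $\mathcal{X}_0\subseteq\mathrm{dom}(F)\subseteq\Omega$, we have $g>0$ pointwise on $\mathcal{X}_0$, so continuity of $g$ (from Assumption \ref{assu:3.1}\,\ref{item:2}) together with compactness of $\mathcal{X}_0$ (Assumption \ref{ass:4.1}) and Weierstrass' theorem immediately yield $m_g>0$. To produce a $\Delta$ satisfying (i), I would appeal to uniform continuity of $g$ on a fixed compact enlargement of $\mathcal{X}_0$: pick $\Delta\in(0,1)$ small enough that $|g(x)-g(y)|\leq m_g/2$ whenever $\|x-y\|_2\leq\Delta$ within this enlargement; then for $x\in\mathcal{X}_{\Delta}$, choosing any $y\in\mathcal{X}_0$ with $\|x-y\|_2\leq\Delta$ gives $g(x)\geq m_g-m_g/2=m_g/2$.

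With $\Delta$ fixed, $\mathcal{X}_{\Delta}$ is the preimage of $[0,\Delta]$ under the continuous distance function and is bounded, hence compact. Item (iii) then follows at once from Weierstrass' theorem: $g$, $\nabla g$, and $f/g$ (well-defined on $\mathcal{X}_{\Delta}$ thanks to (i)) are all continuous, so their suprema over $\mathcal{X}_{\Delta}$ are finite. For the Lipschitz claims in (ii), the convex function $f$ is Lipschitz on every bounded set, as recalled in section \ref{sec2}, while Assumption \ref{assu:3.1}\,\ref{item:2}--\ref{item:3} makes $g$, $\nabla g$, and $\nabla h_1$ locally Lipschitz on $\mathbb{R}^n$; a standard finite-cover argument then upgrades local to global Lipschitz continuity of each on the compact set $\mathcal{X}_{\Delta}$.

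The main technical point I expect to require care is the Lipschitz continuity of the quotient $f/g$, since $f$ need not be differentiable and so a direct calculus argument is unavailable. I would instead work from the algebraic identity
\[
\frac{f(x)}{g(x)}-\frac{f(y)}{g(y)}=\frac{(f(x)-f(y))\,g(y)+f(y)(g(y)-g(x))}{g(x)\,g(y)},
\]
bounding the denominator from below by $(m_g/2)^2$ via (i), and bounding the numerator from above using the Lipschitz constants of $f$ and $g$ on $\mathcal{X}_{\Delta}$ together with the finite suprema $M_g$ and $M_f:=\sup_{x\in\mathcal{X}_{\Delta}}f(x)$ (both finite by the compactness argument above). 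This yields a uniform Lipschitz constant for $f/g$ on $\mathcal{X}_{\Delta}$ and completes the proof of the lemma.
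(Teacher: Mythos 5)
Your proposal is correct and follows essentially the same route as the paper: positivity and compactness give $m_g>0$ and item (i), compactness plus continuity gives item (iii), and local Lipschitz continuity on the compact set $\mathcal{X}_{\Delta}\subseteq\Omega$ gives item (ii). The only difference is one of detail: where the paper cites a standard exercise for upgrading local to global Lipschitz continuity (including for $f/g$), you spell out the finite-cover argument and the quotient identity with denominator bounded below by $(m_g/2)^2$, which is just an explicit proof of the same fact.
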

    \begin{proof}
    	Since $g$ is continuous and $g>0$ on the compact set $\mathcal{X}_{0}$, we obtain that $m_g>0$ and Item \ref{lem:4.1}. Then, Item \ref{lem:4.2} follow from the local Lipschitz continuity around each point in the compact $\mathcal{X}_{\Delta}\subseteq \Omega$ \cite[chapter 1, 7.5 Exercise (c)]{2009Nonsmooth}. Lastly, $M_{\nabla g}$, $M_g$ and $M_{f/g}$ are finite due to the continuity of $\nabla g$, $g$ and $f/g$ on the compact $\mathcal{X}_{\Delta}$, respectively. We then complete the proof.
    \end{proof}
    \indent Next, we establish in the following proposition the well-definedness of Algorithm \ref{al:1}, that also implies its sufficient descent property.
    \begin{proposition}\label{propos:4.2}
    	Suppose that Assumption \ref{ass:4.1} holds. Then Algorithm \ref{al:1} is well defined. Specifically, there exists $\underline{\alpha}_{\sigma}>0$ such that for all $k\in \mathbb{N}$, Step 2 of Algorithm \ref{al:1} terminates at some $\alpha_k \geq \min\{\underline{\alpha},\underline{\alpha}_{\sigma} \}\gamma$. Moreover, the sequence $\{x^k:k\in \mathbb{N}\}$ generated by Algorithm \ref{al:1} falls into the level set $\mathcal{X}_{0} \subseteq \Omega\cap C$ and satisfies
    	\begin{equation}
    		F(x^{k+1}) +\frac{\sigma}{2}\|x^{k+1}-x^k\|_2^2 \leq F(x^k).\label{eq.4.7}
    	\end{equation}
    	\label{pro:4.3}
    \end{proposition}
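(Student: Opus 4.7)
The plan is to proceed by induction on $k$, with the inductive hypothesis $x^k\in\mathcal{X}_0$ (the base case being $x^0\in\mathrm{dom}(F)\subseteq\mathcal{X}_0$). For such $x^k$ I want to show (i) the inner loop of Step 2 terminates with an accepted stepsize $\alpha_k\ge\min\{\underline{\alpha},\underline{\alpha}_\sigma\}\,r$ for some threshold $\underline{\alpha}_\sigma>0$ independent of $k$, and (ii) the new iterate $x^{k+1}=\widehat{x}^k$ satisfies the sufficient descent \eqref{eq.4.7}. The inductive step then closes, because $F(x^{k+1})\le F(x^k)\le F(x^0)$ forces $x^{k+1}\in\mathcal{X}_0\subseteq\Omega\cap C$.

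First I would establish that, for sufficiently small $\alpha$, the output $\widehat{x}^k$ of \eqref{eq:problem4.3} automatically lies in $\mathcal{X}_\Delta$ and hence in $\Omega$ by Lemma~\ref{lemm:4.1}\ref{lem:4.1}. Testing the defining prox inequality for $\widehat{x}^k=\mathrm{prox}_{2\alpha c_k f+\iota_C}(x^k-\alpha w_k)$, with $w_k:=\nabla h_1(x^k)-c_k^2\nabla g(x^k)-z^k$, at the feasible point $x^k\in C$ and using $f\ge0$, produces a quadratic inequality in $\|\widehat{x}^k-x^k\|_2$ whose solution is $\mathcal{O}(\sqrt{\alpha})$ with constants depending only on quantities uniformly bounded over the compact set $\mathcal{X}_0$ (note that $z^k\in\partial h_2(x^k)$ is uniformly bounded since $h_2$ is real-valued convex and $\mathcal{X}_0$ is compact). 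Hence there is a threshold $\alpha_1>0$, independent of $k$, so that $\widehat{x}^k\in\mathcal{X}_\Delta$ whenever $\alpha\le\alpha_1$, activating the Lipschitz constants $L_{h_1},L_g,L_{f/g}$ and the uniform bounds $M_g,M_{f/g},m_g$ from Lemma~\ref{lemm:4.1}.

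Next I would estimate $H(\widehat{x}^k,z^k,c_k)-F(x^k)$. Using $c_k=f(x^k)/g(x^k)$ (so that $f^2(x^k)/g(x^k)=2c_k f(x^k)-c_k^2 g(x^k)$) together with the Fenchel equality $h_2^\star(z^k)=\langle x^k,z^k\rangle-h_2(x^k)$ coming from $z^k\in\partial h_2(x^k)$, this difference equals
\[
2c_k\bigl(f(\widehat{x}^k)-f(x^k)\bigr)-c_k^2\bigl(g(\widehat{x}^k)-g(x^k)\bigr)+h_1(\widehat{x}^k)-h_1(x^k)-\langle \widehat{x}^k-x^k,z^k\rangle.
\]
I would then combine three ingredients: (a) the $1$-strong-convexity prox inequality tested at $x^k$, giving $2\alpha c_k(f(\widehat{x}^k)-f(x^k))+\alpha\langle\widehat{x}^k-x^k,w_k\rangle+\|\widehat{x}^k-x^k\|_2^2\le0$; (b) the descent lemma for $h_1$; (c) the descent lemma applied to $g$ in the reverse direction, which yields $-c_k^2(g(\widehat{x}^k)-g(x^k))\le -c_k^2\langle\nabla g(x^k),\widehat{x}^k-x^k\rangle+\tfrac{c_k^2L_g}{2}\|\widehat{x}^k-x^k\|_2^2$. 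After expanding $w_k$ every linear inner-product term cancels exactly, leaving
\[
H(\widehat{x}^k,z^k,c_k)-F(x^k)\;\le\;\Bigl(\tfrac{L_{h_1}+c_k^2 L_g}{2}-\tfrac{1}{\alpha}\Bigr)\|\widehat{x}^k-x^k\|_2^2.
\]

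The last step is to upgrade from $c_k$ to $\widehat{c}_k:=f(\widehat{x}^k)/g(\widehat{x}^k)$, the value that actually appears in the acceptance test \eqref{eq:problem4.4}. A direct calculation gives
\[
H(\widehat{x}^k,z^k,\widehat{c}_k)-H(\widehat{x}^k,z^k,c_k)=\frac{\bigl(f(\widehat{x}^k)-c_k g(\widehat{x}^k)\bigr)^2}{g(\widehat{x}^k)},
\]
and since $|\widehat{c}_k-c_k|\le L_{f/g}\|\widehat{x}^k-x^k\|_2$ on $\mathcal{X}_\Delta$ while $g(\widehat{x}^k)\ge m_g/2$ and $g(\widehat{x}^k)\le M_g$, this gap is at most $(2L_{f/g}^2M_g^2/m_g)\|\widehat{x}^k-x^k\|_2^2$. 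Adding to the inequality above and using $c_k\le M_{f/g}$, the total coefficient of $\|\widehat{x}^k-x^k\|_2^2$ is $\le -\sigma/2$ whenever $\alpha\le\underline{\alpha}_\sigma$ with the explicit choice
\[
\underline{\alpha}_\sigma:=\Bigl(\tfrac{L_{h_1}+M_{f/g}^2 L_g+\sigma}{2}+\tfrac{2L_{f/g}^2M_g^2}{m_g}\Bigr)^{-1}.
\]
The backtracking rule thus terminates with $\alpha_k\ge\min\{\underline{\alpha},\underline{\alpha}_\sigma\}\,r$, and the descent \eqref{eq.4.7} follows from \eqref{equat:4.7}. The main technical obstacle I foresee is the mismatch between the parameter $c_k$ driving the proximal update and the $\widehat{c}_k$ appearing in the acceptance test; reconciling them requires the local Lipschitz continuity of $f/g$ on $\mathcal{X}_\Delta$, which is only available after the preliminary a priori bound $\widehat{x}^k\in\mathcal{X}_\Delta$ obtained in the second paragraph.
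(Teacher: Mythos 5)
Your proposal is correct and follows essentially the same route as the paper's proof: induction on $x^k\in\mathcal{X}_0$, an a priori $\mathcal{O}(\sqrt{\alpha})$ bound forcing $\widehat{x}^k\in\mathcal{X}_\Delta$, the prox inequality combined with descent lemmas for $h_1$ and $-c_k^2 g$ plus the Fenchel equality, and the identity $H(\widehat{x}^k,z^k,\widehat{c}_k)-H(\widehat{x}^k,z^k,c_k)=g(\widehat{x}^k)(\widehat{c}_k-c_k)^2$ to pass from $c_k$ to $\widehat{c}_k$ (the paper uses the weaker $\tfrac{1}{2\alpha}$ prox inequality and the constant $M_gL_{f/g}^2$ where you use the strongly convex version and $2L_{f/g}^2M_g^2/m_g$, which changes nothing essential). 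The only cosmetic fix is that your final $\underline{\alpha}_\sigma$ should be taken as the minimum of your displayed quantity with the threshold $\alpha_1$ guaranteeing $\widehat{x}^k\in\mathcal{X}_\Delta$, as you yourself note in the closing paragraph.
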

    \begin{proof}
    	We prove this proposition by induction. It is clear that $x^0\in \mathcal{X}_0$ and we assume that $x^k\in \mathcal{X}_0$ for some $k\geq0$.\\
    	\indent Let $\mathcal{X}_{\Delta}$ be given as in Lemma \ref{lemm:4.1}. We first show that there exists $\underline{\alpha}_{\Delta}>0$ such that $\widehat{x}^k$ falls into $\mathcal{X}_{\Delta}\cap C$ as  $\alpha \in (0,\underline{\alpha}_{\Delta}]$. Invoking \eqref{eq:problem4.3}, we deduce from the definition of proximity operators that $\widehat{x}^k \in \mathcal{X}$ and
    	\begin{equation}
    		2\alpha c_k f(\widehat{x}^k)+ \frac{1}{2}\|\widehat{x}^k-x^k\|_2^2 +\langle \widehat{x}^k-x^k, \alpha(\nabla h_1(x^{k}) - c_{k}^2\nabla g(x^k) -z^k)\rangle \leq 2\alpha c_kf(x^k). 
    		\label{eq:4.8}
    	\end{equation}
    	This together with the Cauchy-Schwarz inequality yields that $2\alpha c_k f(\widehat{x}^k)+ \frac{1}{2}\|\widehat{x}^k-x^k\|_2^2 - \alpha\|\widehat{x}^k-x^k\|_2 \|\nabla h_1(x^{k}) - c_{k}^2\nabla g(x^k)-z^k\|_2 \leq 2\alpha c_kf(x^k)$. Noting that this is a quadratic inequality of the term $\|\widehat{x}^k-x^k\|_2$, we deduce from the non-negativity of $2\alpha c_kf(\widehat{x}^k)$ that
    	\[
    	\|\widehat{x}^k-x^k\|_2\leq \alpha \|\nabla{h_1(x^{k})} - c_{k}^2\nabla g(x^k) -z^k\|_2+\sqrt{\alpha^2 \|\nabla h_1(x^{k}) - c_{k}^2\nabla g(x^k) -z^k\|_2^2+4\alpha c_kf(x^k)}.  
    	\]
    	Since $\sup \left\{\|\nabla h_1(x) - c^2\nabla g(x) -z\|_2:x\in \mathcal{X}_0, z\in \partial h_2(x), c=f(x)/g(x)\right\}<+\infty$ and  
    	$\sup \\ \allowbreak  \left\{cf(x) : x \in \mathcal{X}_0, c=f(x)/g(x) \right \} <+ \infty $ due to the compactness of $\mathcal{X}_0$, the continuity of $\nabla{h_1}, f/g, \nabla g, f^2/g$ on $\mathcal{X}_0$ and the convexity of $h_2$, the above inequality along with $x^k \in \mathcal{X}_0$ implies that the term $\|\widehat{x}^k-x^k\|_2$ can be narrow down by the parameter $\alpha$. Therefore, there exists some $\underline{\alpha}_{\Delta}>0$ such that
    	\begin{equation}
    		\|\widehat{x}^k-x^k\|_2 \leq \Delta/2, \text{ for any } \alpha \in (0,\underline{\alpha}_{\Delta}].
    		\label{eq:4.9}
    	\end{equation}
    	This together with $x^k\in \mathcal{X}_0$ yields that $\widehat{x}^k \in \mathcal{X}_{\Delta}\cap\mathcal{X}$ when $\alpha \in (0,\underline{\alpha}_{\Delta}]$.\\
    	\indent Next, we shall show that there exists $\underline{\alpha}_{\sigma}\in(0,\underline{\alpha}_{\Delta}]$ such that $\widehat{x}^k\in \Omega$ and satisfies \eqref{eq:problem4.4} stated in Step 2 of Algorithm \ref{al:1} whenever $\alpha\in(0,\underline{\alpha}_{\sigma}]$. It is obvious that $\widehat{x}^k\in \Omega$ due to $\widehat{x}^k \in \mathcal{X}_{\Delta}$ and Lemma \ref{lemm:4.1} \ref{lem:4.1}. Moreover, $\widehat{x}^k\in\mathcal{X}_{\Delta}$ together with Lemma \ref{lemm:4.1} \ref{lem:4.2} yields that
    	\begin{equation}
    		h_1(\widehat{x}^k ) \leq h_1(x^k) + \langle\nabla h_1(x^{k}),\widehat{x}^k-x^k\rangle +\frac{L_{\nabla h_1}}{2}\|\widehat{x}^k-x^k\|_2^2,
    		\label{eq:4.10}
    	\end{equation}
    	\begin{equation}
    		-c_k^2g(\widehat{x}^k)\leq -c_k^2g(x^k)+\langle-c_k^2\nabla g(x^k), \widehat{x}^k-x^k\rangle+\frac{M_{f/g}^2L_{\nabla g}}{2}\|\widehat{x}^k-x^k\|_2^2,
    		\label{equ:4.12}
    	\end{equation}
    	where $L_{\nabla h_1}>0$ and $L_{\nabla g}>0$ denote the Lipschitz modulus of $\nabla h_1$ and $\nabla g$ on $\mathcal{X}_{\Delta}$, respectively. Multiplying $1/\alpha$ on both sides of \eqref{eq:4.8} and summing \eqref{eq:4.10} and \eqref{equ:4.12}, we have
    	\begin{align} \notag
    			&2c_k f(\widehat{x}^k)-c_k^2g(\widehat{x}^k)+\langle \widehat{x}^k-x^k,-z^k\rangle +h_1(\widehat{x}^k)+ (\frac{1}{2\alpha}-\frac{L_{\nabla h_1}}{2}-\frac{M_{f/g}^2L_{\nabla g}}{2}) \|\widehat{x}^k-x^k\|_2^2\\  &\leq 
    			2 c_kf(x^k) -c_k^2g(x^k) +h_1(\widehat{x}^k).
    		\label{eq:4.11}
    	\end{align}
    	Then, invoking $\langle x^k,z^k \rangle = h_2(x^k)+h_2^{\star}(z^k)$ due to $z^k \in \partial h_2(x^k)$, we deduce from \eqref{eq:4.11} and $c_k=f(x^k)/g(x^k)$ that 
    	\begin{align} \notag
    			&2c_k f(\widehat{x}^k)-c_k^2g(\widehat{x}^k)+h_1(\widehat{x}^k)-\langle \widehat{x}^k,z^k \rangle+h_2^{\star}(z^k)+ (\frac{1}{2\alpha}-\frac{L_{\nabla h_1}}{2}-\frac{M_{f/g}^2L_{\nabla g}}{2})) \|\widehat{x}^k-x^k\|_2^2 \\ &\leq 
    			2c_kf(x^k)-c_k^2g(x^k)+h_1(x^k)-h_2(x^k)=F(x^k).
    		\label{eq:4.14}
    	\end{align}
    	By simple calculation, we have for $\widehat{c}:=f(\widehat{x}^k)/g(\widehat{x}^k)$
    	\begin{align}\notag
    			0 &\leq 2\widehat{c}f(\widehat{x}^k)-\widehat{c}^2g(\widehat{x}^k) - (2c_kf(\widehat{x}^k)-c_k^2g(\widehat{x}^k))
    			=\frac{f^2(\widehat{x}^k)}{g(\widehat{x}^k)} - \left(2c_kf(\widehat{x}^k)-c_k^2g(\widehat{x}^k)\right)\\
    			&=g(\widehat{x}^k)\left(c_k-\frac{f(\widehat{x}^k)}{g(\widehat{x}^k)}\right)^2
    			\leq M_gL^2_{f/g}\|\widehat{x}^k-x^k\|_2^2,
    		\label{eq:4.15}	    	
    	\end{align}
    	where the first inequality holds since $\widehat{c} = \arg\max\left\{2cf(\widehat{x}^k)-c^2g(\widehat{x}^k):c\in \mathbb{R}\right\}$ and the last inequality holds from the fact $\widehat{x}^k \in \mathcal{X}_{\Delta}$, Lemma \ref{lemm:4.1} \ref{lem:4.3} and the Lipschitz continuity of $f/g$ on $\mathcal{X}_{\Delta}$(Lemma \ref{lemm:4.1} \ref{lem:4.2}) with modulus $L_{f/g}$. Then \eqref{eq:4.14} and \eqref{eq:4.15} imply that 
    	\begin{equation}
    		H(\widehat{x}^k,z^k,f(\widehat{x}^k)/g(\widehat{x}^k))+\left(\frac{1}{2\alpha}-\frac{L_{\nabla h_1}}{2}-\frac{M_{f/g}^2L_{\nabla g}}{2}-M_gL^2_{f/g}\right)\|\widehat{x}^k-x^k\|_2^2\leq F(x^k).
    		\label{eq:4.16}
    	\end{equation}
    	This together with \eqref{equat:4.7} leads
    	to 
    	\begin{equation}
    		F(\widehat{x}^{k}) +\left(\frac{1}{2\alpha}-\frac{L_{\nabla h_1}}{2} -\frac{M_{f/g}^2L_{\nabla g}}{2} - M_gL^2_{f/g}\right) \|\widehat{x}^k-x^k\|_2^2 \leq F(x^k).
    		\label{eq:4.17}
    	\end{equation} 
    	Therefore, by setting $\underline{\alpha}_{\sigma}:=\min\left\{\underline{\alpha}_{\Delta}, \left(L_{\nabla h_1}+M_{f/g}^2L_{\nabla g}+2M_gL^2_{f/g}\right)^{-1}\right\}$, we deduce $\widehat{x}^k \in \mathcal{X}_{0}$ and \eqref{eq.4.7} from \eqref{eq:4.17} and Step 2 of Algorithm \ref{al:1} terminates at some $\alpha_k \geq \min \left\{\underline{\alpha},\underline{\alpha}_{\sigma}\right\}\gamma$ in the $k$-th iteration from \eqref{eq:4.16}. We complete the proof immediately.  
    \end{proof}
    \subsection{Subsequential convergence of Algorithm \ref{al:1}}
    We investigate in this subsection the subsequential convergence of Algorithm \ref{al:1} under Assumption \ref{ass:4.1}. We will show in Theorem \ref{the:4.1} that any accumulation point of sequence generated by Algorithm \ref{al:1} is a critical point of $F$. To this end, we first show the convergence of the objective values in the next corollary, which is a direct consequence of Proposition \ref{pro:4.3}.
    \begin{corollary}
    	Suppose that Assumption \ref{ass:4.1} holds. Let $\left\{(x^k,z^k,c_k):k\in\mathbb{N}\right\}$ be generated by Algorithm \ref{al:1}. Then the following statements holds:
    	\begin{enumerate}[label=\textup{(\roman*)}]
    		\item $\left\{(x^k,z^k,c_k):k\in\mathbb{N}\right\}$ is bounded;
    		\label{cor:4.1}
    		\item $\underset{k \to \infty}{\lim}F(x^k)$ exists;
    		\label{cor:4.2}
    		\item $\underset{k \to \infty}{\lim}\|x^{k+1}-x^{k}\|_2=0$.
    		\label{cor:4.3} 
    	\end{enumerate}
    	\label{corollary:4.1}
    \end{corollary}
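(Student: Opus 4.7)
The plan is to derive all three items directly from Proposition~\ref{pro:4.3}, exploiting two facts it establishes: the iterates $\{x^k\}$ remain in the compact level set $\mathcal{X}_0$ (so in particular in $\mathcal{X}_\Delta \subseteq \Omega \cap C$), and the sufficient descent inequality \eqref{eq.4.7} holds at every step.

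For Item~\ref{cor:4.1}, boundedness of $\{x^k\}$ is immediate from $x^k \in \mathcal{X}_0$ and Assumption~\ref{ass:4.1}. For $\{c_k\}$, since $c_k = f(x^k)/g(x^k)$ and $\mathcal{X}_0 \subseteq \mathcal{X}_\Delta$, Lemma~\ref{lemm:4.1}\ref{lem:4.3} gives $|c_k| \le M_{f/g} < +\infty$. For $\{z^k\}$, I would invoke the convexity of $h_2$ (Assumption~\ref{assu:3.1}\ref{item:3}) together with the standard property of real-valued convex functions, recalled in Section~\ref{sec2}, that the union of subdifferentials over a bounded set is bounded: since $\{x^k\}\subseteq\mathcal{X}_0$ is bounded and $z^k\in\partial h_2(x^k)$, the sequence $\{z^k\}\subseteq\bigcup_{x\in\mathcal{X}_0}\partial h_2(x)$ is bounded.

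For Item~\ref{cor:4.2}, the descent inequality \eqref{eq.4.7} shows that $\{F(x^k)\}$ is nonincreasing. Moreover, because $F$ is continuous on the compact set $\mathcal{X}_0\subseteq\Omega\cap C$ (thanks to Assumption~\ref{assu:3.1}), $F$ attains a finite minimum on $\mathcal{X}_0$, and since $x^k\in\mathcal{X}_0$ for all $k$, the sequence $\{F(x^k)\}$ is bounded below. A bounded monotone sequence converges, so $\lim_{k\to\infty}F(x^k)$ exists. For Item~\ref{cor:4.3}, I would telescope \eqref{eq.4.7}: summing from $k=0$ to $N$ yields
\[
\frac{\sigma}{2}\sum_{k=0}^{N}\|x^{k+1}-x^k\|_2^2 \;\le\; F(x^0)-F(x^{N+1}).
\]
Letting $N\to\infty$ and using the lower boundedness of $\{F(x^k)\}$ from Item~\ref{cor:4.2} shows that $\sum_{k=0}^{\infty}\|x^{k+1}-x^k\|_2^2 < +\infty$, which forces $\|x^{k+1}-x^k\|_2\to 0$.

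No step is genuinely hard; the only point requiring care is the boundedness of $\{z^k\}$ in Item~\ref{cor:4.1}, which relies on $h_2$ being a real-valued convex function (as opposed to merely proper closed convex) so that the local boundedness of its subdifferential on bounded subsets of $\mathbb{R}^n$ can be invoked. All the other pieces follow mechanically from compactness of $\mathcal{X}_0$, continuity of the relevant functions on $\mathcal{X}_\Delta$, and the descent property~\eqref{eq.4.7}.
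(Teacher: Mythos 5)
Your proposal is correct and follows essentially the same route as the paper: boundedness of $\{x^k\}$ from $\mathcal{X}_0$, of $\{c_k\}$ from Lemma \ref{lemm:4.1}\ref{lem:4.3}, of $\{z^k\}$ from the local boundedness of the subdifferential of the real-valued convex $h_2$, and Items \ref{cor:4.2}--\ref{cor:4.3} from the descent inequality \eqref{eq.4.7} together with the lower boundedness of $F$ on the compact set $\mathcal{X}_0$. Your telescoping argument for Item \ref{cor:4.3} even yields the slightly stronger square-summability $\sum_k\|x^{k+1}-x^k\|_2^2<+\infty$, whereas the paper simply passes to the limit in \eqref{eq.4.7}; both are fine.
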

    \begin{proof}
    	Firstly, Proposition \ref{pro:4.3} implies that $\left\{x^k:k\in\mathbb{N} \right\}\subseteq \mathcal{X}_{0}$. This together with Assumption \ref{ass:4.1} leads to the boundedness of $\left\{x^k:k\in\mathbb{N}\right\}$. Since $h_2$ is real-valued convex, the boundedness of $\left\{x^k:k\in\mathbb{N}\right\}$ as well as $z^k \in \partial h_2(x^k)$ yield that $\{z^k:k\in\mathbb{N}\}$ is bounded. Lemma \ref{lemm:4.1} \ref{lem:4.3}  and $\{x^k:k\in\mathbb{N}\}\subseteq\mathcal{X}_{0}\subseteq\mathcal{X}_{\Delta}$ imply that $\{c_k:k\in\mathbb{N}\}$ is bounded due to $c_k=f(x^k)/g(x^k)$. Then, Item \ref{cor:4.1} of this corollary is followed. Secondly, since $F$ is continuous on $\mathcal{X}_{0}$, $F$ is bounded below on the compact set $\mathcal{X}_{0}$. This together with \eqref{eq.4.7} indicates Item \ref{cor:4.2}. By passing to the limit on the both sides of \eqref{eq.4.7} with $k\to \infty$, Item \ref{cor:4.3} follows immediately.
    \end{proof}
    Now, we are ready to show the subsequential convergence of the proposed Algorithm \ref{al:1}.
    \begin{theorem}\label{the:4.1}
    	Suppose that Assumption \ref{ass:4.1} holds. Then any accumulation point of the sequence $\left\{x^k:k\in\mathbb{N}\right\}$ generated by Algorithm \ref{al:1} falls into $\mathcal{X}_{0}$, and is a critical point of $F$. 
    \end{theorem}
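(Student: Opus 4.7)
The plan is to extract a convergent subsequence $\{x^{k_j}: j \in \mathbb{N}\}$ of $\{x^k\}$ with limit $x^\star$ and then pass to the limit in the first-order inclusion produced by the proximal step \eqref{eq:problem4.3}. Since $\{x^k\}\subseteq \mathcal{X}_0$ by Proposition \ref{pro:4.3} and $\mathcal{X}_0$ is closed by Assumption \ref{ass:4.1}, every accumulation point $x^\star$ lies in $\mathcal{X}_0 \subseteq \Omega \cap C$; in particular $g(x^\star) \geq m_g/2 > 0$ by Lemma \ref{lemm:4.1} \ref{lem:4.1}, so $c_\star := f(x^\star)/g(x^\star)$ is well defined and $c_{k_j} \to c_\star$ by continuity of $f,g$. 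The task therefore reduces to producing $w^\star \in \partial(2c_\star f + \iota_C)(x^\star)$ and $z^\star \in \partial h_2(x^\star)$ satisfying $w^\star = c_\star^2 \nabla g(x^\star) - \nabla h_1(x^\star) + z^\star$, which rearranges to the critical point condition \eqref{eq:problem9}.

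Next, I would unfold the proximal definition in \eqref{eq:problem4.3}. Its optimality condition, after dividing through by $\alpha_k$ and using that $\alpha_k^{-1}(2\alpha_k c_k \partial f + N_C) = 2c_k \partial f + N_C = \partial(2c_k f + \iota_C)$ (the normal cone is a cone), yields
\[
w^{k} := \tfrac{1}{\alpha_{k}}(x^{k} - x^{k+1}) - \nabla h_1(x^{k}) + c_{k}^2 \nabla g(x^{k}) + z^{k} \in \partial(2c_{k} f + \iota_C)(x^{k+1}).
\]
By Proposition \ref{pro:4.3}, the stepsize satisfies the uniform positive lower bound $\alpha_k \geq \min\{\underline{\alpha}, \underline{\alpha}_\sigma\}\gamma$, and by Corollary \ref{corollary:4.1} \ref{cor:4.3} we have $\|x^{k+1} - x^k\|_2 \to 0$; together these imply $x^{k_j+1} \to x^\star$ and force the first term of $w^{k_j}$ to vanish. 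The sequence $\{z^{k_j}\}$ is bounded by Corollary \ref{corollary:4.1} \ref{cor:4.1}, so after passing to a further subsequence $z^{k_j} \to z^\star$. Continuity of $\nabla h_1$ and $\nabla g$ then delivers $w^{k_j} \to w^\star = -\nabla h_1(x^\star) + c_\star^2 \nabla g(x^\star) + z^\star$.

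Finally, I would upgrade these limit relations to subdifferential memberships via the closedness of the convex subdifferential graph. Since $h_2$ is convex, the defining inequality $h_2(y) \geq h_2(x^{k_j}) + \langle z^{k_j}, y - x^{k_j}\rangle$ holds for every $y \in \mathbb{R}^n$, and passing to the limit yields $z^\star \in \partial h_2(x^\star)$. For the other inclusion, $2c_{k_j} f + \iota_C$ is convex since $c_{k_j}\geq 0$; the inequality $2c_{k_j} f(y) \geq 2c_{k_j} f(x^{k_j+1}) + \langle w^{k_j}, y - x^{k_j+1}\rangle$ for $y \in C$ passes to the limit using $c_{k_j} \to c_\star$, continuity of $f$, and $x^\star \in C$, giving $w^\star \in \partial(2c_\star f + \iota_C)(x^\star)$. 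Rearranging the formula for $w^\star$ then produces exactly \eqref{eq:problem9}, certifying $x^\star$ as a critical point in the sense of Definition \ref{de:3.1}. The main subtlety—and the reason the line search of Algorithm \ref{al:1} is designed with the uniform positive stepsize floor guaranteed by Proposition \ref{pro:4.3}—is that without such a bound the residual $\alpha_{k_j}^{-1}(x^{k_j} - x^{k_j+1})$ could fail to vanish in the limit and the passage from $w^{k_j}$ to $w^\star$ would break down.
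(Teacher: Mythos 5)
Your proposal is correct and follows essentially the same route as the paper: restrict to a convergent subsequence, use the uniform stepsize lower bound from Proposition \ref{pro:4.3} together with $\|x^{k+1}-x^k\|_2\to 0$ from Corollary \ref{corollary:4.1}, and pass to the limit in the variational characterization of the proximal step, identifying $c_\star=f(x^\star)/g(x^\star)$ and $z^\star\in\partial h_2(x^\star)$ by continuity and closedness of the convex subdifferential. The only cosmetic difference is that the paper passes to the limit in the prox-as-minimizer inequality and then applies Fermat's rule, whereas you pass to the limit directly in the subgradient inclusion via the convex subgradient inequality; the two are equivalent here since $f$ is real-valued convex and hence continuous.
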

    \begin{proof}
    	Let $x^{\star}$ be an accumulation point of $\left\{x^k:k\in\mathbb{N}\right\}$. Since $\left\{x^k:k\in\mathbb{N}\right\} \subseteq \mathcal{X}_{0}$ due to Proposition \ref{pro:4.3}, we have $x^{\star}\in\mathcal{X}_{0}$ from the compactness of $\mathcal{X}_{0}$. We next show $x^{\star}$ is a critical point of $F$.\\
    	\indent In view of Proposition \ref{pro:4.3} and Algorithm \ref{al:1}, we have 
    	\begin{equation}
    		x^{k+1} \in \mathrm{prox}_{2\alpha_k c_{k}f+\iota_{\mathcal{X}}} ( x^{k} - \alpha_k(\nabla h_1(x^k)-c_{k}^2\nabla{g(x^k)}-z^{k}) )
    		\label{eq:4.18}
    	\end{equation}
    	for some $\alpha_k \geq \min\left\{\underline{\alpha},\underline{\alpha}_{\sigma}\right\}\gamma$. Since $\left\{(x^k,z^k,c_k):k\in\mathbb{N}\right\}$ is bounded (Corollary \ref{corollary:4.1}) and $\left\{\alpha_k:k\in\mathbb{N}\right\}$ is bounded, there exists $\{k_j:j\in \mathbb{N}\} \subseteq \mathbb{N}$, $z^{\star}\in\mathbb{R}^n$, $c_{\star} \geq 0$ and $\alpha_{\star}>0$, such that $\lim_{k \to \infty}x^{k_j}=x^{\star}$, $\lim_{k \to \infty}z^{k_j}=z^{\star}$, $\lim_{k \to \infty}c_{k_j}=c_{\star}$ and $\lim_{k \to \infty}\alpha_{k_j}=\alpha_{\star}$. Then, Corollary \ref{corollary:4.1} \ref{cor:4.3} implies that  $\lim_{k \to \infty}x^{k_j+1}=x^{\star}$. In view of the definition of the proximity operator, we deduce from \eqref{eq:4.18} that for any $x\in C$, it holds that
    	\begin{equation*}
    		\begin{aligned}
    			2c_{k_j} f(x^{k_j+1})+ \frac{1}{2\alpha_{k_j}}\|x^{k_j+1}-x^{k_j}+\alpha_{k_j}(\nabla h_1(x^{k_j}) - c_{k_j}^2 \nabla g(x^{k_j}) -z^{k_j})\|_2^2 \\
    			\leq 2 c_{k_j} f(x)+ \frac{1}{2\alpha_{k_j}}\|x-x^{k_j}+\alpha_{k_j}(\nabla h_1(x^{k_j}) - c_{k_j}^2 \nabla g(x^{k_j}) -z^{k_j})\|_2^2. 
    		\end{aligned}
    	\end{equation*}
    	By passing to the limit with $j\to\infty$, we obtain from the continuity of $f$, $\nabla h_1$ and $\nabla g$ that
    	\[
    	2c_{\star}f(x^{\star}) \leq 2c_{\star}f(x)+ \frac{1}{2\alpha_{\star}}\|x-x^{\star}\|_2^2 +\langle x-x^{\star}, \nabla h_1(x^{\star}) - c_{\star}^2\nabla g(x^{\star}) -z^{\star}\rangle 
    	\]
    	holds for any $x\in C$. This means that $x^{\star}$ is a minimizer of the problem
    	\[
    	\min\left\{2c_{\star}f(x)+ \frac{1}{2\alpha_{\star}}\|x-x^{\star}\|_2^2 +\langle x-x^{\star}, \nabla h_1(x^{\star}) - c_{\star}^2\nabla g(x^{\star}) -z^{\star}\rangle:x\in \mathcal{X} \right\}.   		
    	\]
    	This together with the generalized Fermat's Rule indicates that 
    	\begin{equation}
    		0\in \partial(2 c_{\star}f+\iota_{C})(x^{\star}) +\nabla h_1(x^{\star}) - c_{\star}^2\nabla g(x^{\star}) -z^{\star} .
    		\label{eq:4.19}
    	\end{equation}
    	Due to the continuity of $f/g$ on $\mathcal{X}_0$ and the closedness of $\partial h_2$, we have $c_{\star}=f(x^{\star})/g(x^{\star})$ and $z^{\star} \in \partial h_2(x^{\star})$. Then,  \eqref{eq:4.19} implies that $x^{\star}$ is a critical point of $F$. We then complete the proof.
    \end{proof}
    \section{Sequential convergence of Algorithm \ref{al:1}} \label{sec6}
    \indent In this section, we investigate the convergence of the entire sequence $\{x^k:k\in\mathbb{N}\}$ generated by the Algorithm \ref{al:1} for solving problem \eqref{eq:problem2} under the KL assumption. We first review the notion of KL property in the next subsection and then establish the convergence of the whole sequence $\{x^k:k\in\mathbb{N}\}$ generated by Algorithm \ref{al:1} in Section \ref{Se:5.2}.
    \subsection{KL property}
    We review in this subsection the KL property. This property and the associated notion of KL exponent have been used extensively in the convergence analysis of various first order methods \cite{attouch2009convergence,attouch2010proximal,attouch2013convergence,bolte2014proximal}.
    \begin{definition}[Kurdyka-Łojasiewicz (KL) property]
    	We say that a proper function \( \varphi : \mathbb{R}^n \to (-\infty, +\infty] \) satisfies the Kurdyka-Łojasiewicz (KL) property at an \(\widehat{x} \in \operatorname{dom} \partial \varphi\) if there are \(a \in (0, +\infty]\), a neighborhood \(U\) of \(\widehat{x}\) and a continuous concave function \(\phi : [0, a) \to [0, +\infty)\) with \(\phi(0) = 0\) such that:
    	\begin{enumerate}[label=\textup{(\roman*)}]
    		\item \(\phi\) is continuously differentiable on \((0, a)\) with \(\phi' > 0\) on \((0, a)\);
    		\item for every \(x \in U\) with \(\varphi(\widehat{x}) < \varphi(x) < \varphi(\widehat{x}) + a\), it holds that
    		\begin{equation}
    			\phi'(\varphi(x) - \varphi(\widehat{x})) \operatorname{dist}(0, \partial \varphi(x)) \geq 1.
    			\label{equ:5.1}
    		\end{equation}
    	\end{enumerate}
    \end{definition}
    A proper function $\varphi:\mathbb{R}^n\to(-\infty,+\infty]$ is called a KL function, if it satisfies the KL property at each point of $\textnormal{dom}\partial \varphi$. KL functions encompass extensive functions arose in various applications. For instance a subanalytic function \cite[Definition 6.6.1]{facchinei2003finite} $\varphi$, which is continuous around $x\in\textnormal{dom}\partial \varphi$, satisfies the KL property at $x$  \cite[Theorem 3.1]{bolte2007lojasiewicz}. Moreover, a proper semi-algebraic function $ \varphi$, which is lower semicontinuous around $x\in\textnormal{dom}\partial \varphi$, satisfies the KL property at $x$.\\
    \indent We next review a framework for proving sequential convergence using the KL property.
    \begin{proposition}{\cite[Proposition 2.7]{li2022proximal}}
    	Let \(\Psi : \mathbb{R}^n \times \mathbb{R}^m \to (-\infty, +\infty]\) be proper lower semicontinuous. Consider a bounded sequence \(\{(u^k, v^k) \in \mathbb{R}^n \times \mathbb{R}^m : k \in \mathbb{N}\}\) satisfying the following three conditions:
    	\begin{enumerate}[label=\textup{(\roman*)}]
    		\item (Sufficient descent condition). There exists \(a > 0\), such that
    		\[
    		\Psi(u^{k+1}, v^{k+1}) + a\|u^{k+1} - u^k\|_2^2 \leq \Psi(u^k, v^k), \quad \text{for all } k \in \mathbb{N};
    		\]
    		
    		\item (Relative error condition). There exists \(b > 0\), such that
    		\[
    		\mathrm{dist}(0, \partial \Psi(u^{k+1}, v^{k+1})) \leq b\|u^{k+1} - u^k\|_2, \quad \text{for all } k \in \mathbb{N};
    		\]
    		
    		\item (Continuity condition). The limit \(\Psi_{\infty} := \lim_{k \to \infty} \Psi(u^k, v^k)\) exists and \(\Psi \equiv \Psi_{\infty}\) holds on \(\Upsilon\), where \(\Upsilon\) denotes the set of accumulation points of \(\{(u^k, v^k) : k \in \mathbb{N}\}\). 
    	\end{enumerate}
    	If \(\Psi\) satisfies the KL property at each point of \(\Upsilon\), then we have \(\sum_{k=0}^{\infty} \|u^{k+1} - u^k\|_2 < +\infty\), \(\lim_{k \to \infty} u^k = u^*\) and \(0 \in \partial \Psi(u^*, v^*)\) for some \((u^*, v^*) \in \Upsilon\).
    	\label{prop:5.1}
    \end{proposition}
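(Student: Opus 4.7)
The plan is to follow the classical Kurdyka-\L{}ojasiewicz template for nonconvex descent algorithms, treating the proposition as an abstract statement about any sequence satisfying the three hypotheses, without reference to the particular iteration producing it. Let $\Psi_{\infty}:=\lim_{k\to\infty}\Psi(u^k,v^k)$, which exists by hypothesis (iii). The first step is to dispose of the degenerate case: if $\Psi(u^{k_0},v^{k_0})=\Psi_{\infty}$ for some $k_0$, then hypothesis (i) forces $u^{k+1}=u^k$ for all $k\geq k_0$ and the conclusion is immediate. So assume $\Psi(u^k,v^k)>\Psi_{\infty}$ for every $k$.

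Next I would upgrade the pointwise KL property on $\Upsilon$ to a uniform version. Since $\{(u^k,v^k)\}$ is bounded, $\Upsilon$ is nonempty and compact, and $\Psi\equiv\Psi_{\infty}$ on it. Using the standard compactness/covering argument (the ``uniformized KL lemma'' in the spirit of Bolte--Sabach--Teboulle), I would produce a single concave desingularizing function $\phi:[0,a)\to[0,+\infty)$ and a neighborhood $U$ of $\Upsilon$ such that
\[
\phi'\bigl(\Psi(u,v)-\Psi_{\infty}\bigr)\,\mathrm{dist}\bigl(0,\partial\Psi(u,v)\bigr)\geq 1
\]
for all $(u,v)\in U$ with $\Psi_{\infty}<\Psi(u,v)<\Psi_{\infty}+a$. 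Because $\Upsilon$ is the accumulation set of $\{(u^k,v^k)\}$ and $\Psi(u^k,v^k)\downarrow\Psi_{\infty}$, both $(u^k,v^k)\in U$ and the strict inequality on $\Psi$ hold for all sufficiently large $k$, say $k\geq K$.

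The core estimate combines the three hypotheses with this uniform KL inequality. Writing $\Delta_k:=\phi(\Psi(u^k,v^k)-\Psi_{\infty})$, concavity of $\phi$ plus hypothesis (i) gives
\[
\Delta_k-\Delta_{k+1}\geq \phi'\bigl(\Psi(u^k,v^k)-\Psi_{\infty}\bigr)\cdot a\|u^{k+1}-u^k\|_2^{2}.
\]
Inserting the KL lower bound for $\phi'$ and then replacing $\mathrm{dist}(0,\partial\Psi(u^k,v^k))$ by $b\|u^k-u^{k-1}\|_2$ via hypothesis (ii) yields
\[
\|u^{k+1}-u^k\|_2^{2}\leq \tfrac{b}{a}\|u^k-u^{k-1}\|_2\bigl(\Delta_k-\Delta_{k+1}\bigr).
\]
Applying $\sqrt{xy}\leq\tfrac12(x+y)$ and summing from $K$ to $N$ gives a telescoping bound that, after rearranging, produces $\sum_{k\geq K}\|u^{k+1}-u^k\|_2<+\infty$. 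Hence $\{u^k\}$ is Cauchy and converges to some $u^{\star}$. Along any subsequence converging in the second coordinate to some $v^{\star}$, the pair $(u^{\star},v^{\star})$ lies in $\Upsilon$; passing to the limit in hypothesis (ii), using $\|u^{k+1}-u^k\|_2\to 0$ and the closedness of the limiting subdifferential together with the continuity condition, gives $0\in\partial\Psi(u^{\star},v^{\star})$.

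The main technical obstacle is the uniformization step: one must carefully paste together the local KL data at different points of $\Upsilon$ into a single $\phi$ defined on a single neighborhood and a single interval $[0,a)$, since the iterates approach $\Upsilon$ but not any fixed point of it in general. After that step the argument is the routine telescoping/AM-GM calculation above, and so the rest is essentially bookkeeping.
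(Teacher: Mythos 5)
This proposition is not proved in the paper at all: it is imported verbatim as \cite[Proposition 2.7]{li2022proximal} and used as a black box in the proof of Theorem \ref{theo5.1}, so there is no in-paper argument to compare yours against. Judged on its own, your proof is correct and is the standard Attouch--Bolte--Svaiter / Bolte--Sabach--Teboulle descent template, adapted to the two-block setting in which only the $u$-increments appear in conditions (i) and (ii) and only convergence of $\{u^k\}$ is claimed; this is almost certainly the same route the cited source takes. Two small pieces of bookkeeping are worth making explicit: first, when you divide by $\|u^k-u^{k-1}\|_2$ after invoking hypothesis (ii), you should note that in the non-degenerate case this quantity cannot vanish for large $k$, since $\|u^k-u^{k-1}\|_2=0$ together with (ii) would force $\operatorname{dist}(0,\partial\Psi(u^k,v^k))=0$ and contradict the uniform KL inequality at $(u^k,v^k)$; second, the claim that $(u^k,v^k)\in U$ for all large $k$ uses that the sequence is bounded and \emph{all} of its accumulation points lie in the compact set $\Upsilon$, so $\operatorname{dist}((u^k,v^k),\Upsilon)\to 0$. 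With those remarks supplied, the telescoping/AM--GM computation and the final closedness-of-$\partial\Psi$ step are exactly as they should be.
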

    \subsection{Sequential convergence of Algorithm \ref{al:1}}\label{Se:5.2}
    This subsection is devoted to the full sequential convergence of Algorithm \ref{al:1}. We begin with a key proposition regarding the sufficient descent of $H$ defined by  \eqref{equation:4.2} and the relative error condition. 
    \begin{proposition}
    	Suppose that Assumption \ref{ass:4.1} holds and let $\{(x^k,z^k,c_k):k\in \mathbb{N}\}$ be generated by Algorithm \ref{al:1}. Then, the following statements hold:
    	\begin{enumerate}[label=\textup{(\roman*)}]
    		\item There exists $a>0$, such that for all $k \in \mathbb{N}$,\\
    		$H(x^{k+1},z^k,c_{k+1}) + a\|x^{k+1}-x^k\|_2^2 \leq H(x^{k},z^{k-1},c_{k})$;
    		\label{pro:5.1}
    		\item There exists $b>0$, such that for all $k \in \mathbb{N}$,\\
    		$\mathrm{dist}\text{ }(0,\partial H(x^{k+1},z^k,c_{k+1})) \leq b\|x^{k+1}-x^k\|_2 $.
    		\label{pro:5.2} 
    	\end{enumerate}
    	\label{proposition:5.1}
    \end{proposition}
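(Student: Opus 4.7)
The plan is to deduce both parts from the two core mechanisms of Algorithm \ref{al:1}: the line-search acceptance criterion \eqref{eq:problem4.4} for (i), and the first-order optimality condition associated with the proximal subproblem \eqref{eq:problem4.3} for (ii). Throughout, I will use Lemma \ref{lemm:4.1} to bound quantities evaluated at the iterates, since Proposition \ref{pro:4.3} guarantees $\{x^k\} \subseteq \mathcal{X}_0$ and $\alpha_k$ is uniformly bounded away from zero.

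For part (i), I first observe that $c_k = f(x^k)/g(x^k)$ yields $2c_k f(x^k) - c_k^2 g(x^k) = f^2(x^k)/g(x^k)$, and the Fenchel--Young inequality gives $h_2^{\star}(z^{k-1}) - \langle x^k, z^{k-1}\rangle \geq -h_2(x^k)$. Together with $x^k \in C$, these imply $F(x^k) \leq H(x^k, z^{k-1}, c_k)$. On the other hand, the acceptance criterion \eqref{eq:problem4.4}, specialized to $\widehat{x}^k = x^{k+1}$ and $c_{k+1} = f(x^{k+1})/g(x^{k+1})$, reads
\[
H(x^{k+1}, z^k, c_{k+1}) + \tfrac{\sigma}{2}\|x^{k+1} - x^k\|_2^2 \leq F(x^k).
\]
Chaining the two inequalities gives (i) with $a = \sigma/2$.

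For part (ii), since the nonsmooth components of $H$ (namely $2cf(x)$, $\iota_C(x)$, and $h_2^{\star}(z)$) are separable across the blocks $(x,c)$ and $z$ while the remainder of $H$ is $C^1$ jointly, an element of $\partial H(x^{k+1}, z^k, c_{k+1})$ has the form
\[
\bigl(\,2c_{k+1}\eta + \nu - c_{k+1}^2 \nabla g(x^{k+1}) + \nabla h_1(x^{k+1}) - z^k,\;\; \mu - x^{k+1},\;\; 2f(x^{k+1}) - 2c_{k+1} g(x^{k+1})\,\bigr),
\]
with $\eta \in \partial f(x^{k+1})$, $\nu \in N_C(x^{k+1})$ and $\mu \in \partial h_2^{\star}(z^k)$. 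I pick a particular such element as follows. The $c$-coordinate vanishes because $c_{k+1} = f(x^{k+1})/g(x^{k+1})$. Since $z^k \in \partial h_2(x^k)$ is equivalent to $x^k \in \partial h_2^{\star}(z^k)$, I take $\mu = x^k$, so the $z$-coordinate becomes $x^k - x^{k+1}$, of norm $\|x^{k+1} - x^k\|_2$. For the $x$-coordinate, the optimality condition of \eqref{eq:problem4.3} supplies $\eta \in \partial f(x^{k+1})$ and $\nu \in N_C(x^{k+1})$ with
\[
2c_k \eta + \nu = -\tfrac{1}{\alpha_k}(x^{k+1} - x^k) - \nabla h_1(x^k) + c_k^2 \nabla g(x^k) + z^k.
\]
Using $2c_{k+1}\eta + \nu = (2c_k\eta + \nu) + 2(c_{k+1} - c_k)\eta$, the $x$-coordinate becomes
\[
2(c_{k+1} - c_k)\eta - \tfrac{1}{\alpha_k}(x^{k+1} - x^k) + \bigl[\nabla h_1(x^{k+1}) - \nabla h_1(x^k)\bigr] + \bigl[c_k^2 \nabla g(x^k) - c_{k+1}^2 \nabla g(x^{k+1})\bigr].
\]
Each term is controlled by a constant multiple of $\|x^{k+1} - x^k\|_2$ via Lemma \ref{lemm:4.1}: $\nabla h_1$ and $\nabla g$ are Lipschitz on $\mathcal{X}_{\Delta}$, $c_k$ and $c_{k+1}$ are bounded by $M_{f/g}$, $\nabla g(x^k)$ is bounded by $M_{\nabla g}$, Lipschitzness of $f/g$ on $\mathcal{X}_{\Delta}$ controls $|c_{k+1}-c_k|$, and $\|\eta\|_2$ is bounded because the convex real-valued $f$ has $\bigcup_{x\in\mathcal{X}_0}\partial f(x)$ bounded. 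Finally, $1/\alpha_k$ is uniformly bounded by Proposition \ref{pro:4.3}. Summing the squared norms of the three coordinates yields (ii).

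The main obstacle is the mismatch between $c_k$ (the coefficient used inside the proximal step) and $c_{k+1}$ (the coefficient appearing in the subdifferential at the new point). The identity $2c_{k+1}\eta = 2c_k\eta + 2(c_{k+1}-c_k)\eta$ bridges this gap, but one must reuse the same $\eta \in \partial f(x^{k+1})$ and argue that $\|\eta\|_2$ stays bounded along the iteration. Both ingredients rest on the convexity of $f$ (so $\partial f$ is locally bounded on the compact level set $\mathcal{X}_0$) and on the Lipschitz continuity of $x \mapsto f(x)/g(x)$ on $\mathcal{X}_{\Delta}$, both of which are supplied by Lemma \ref{lemm:4.1}.
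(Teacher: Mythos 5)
Your proof is correct and follows essentially the same route as the paper's: part (i) chains the line-search acceptance condition \eqref{eq:problem4.4} (with $\widehat{x}^k=x^{k+1}$) against the Fenchel--Young bound $F(x^k)\le H(x^k,z^{k-1},c_k)$ to get $a=\sigma/2$, and part (ii) assembles an explicit element of $\partial H(x^{k+1},z^k,c_{k+1})$ from the optimality condition of the proximal subproblem \eqref{eq:problem4.3} (taking $x^k\in\partial h_2^{\star}(z^k)$ for the $z$-block and using $c_{k+1}=f(x^{k+1})/g(x^{k+1})$ to kill the $c$-block) and then bounds the $x$-block term by term via Lemma \ref{lemm:4.1}. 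The only deviation is bookkeeping: the paper rescales the entire prox residual by the ratio $c_{k+1}/c_k$ (which implicitly requires $f/g$ to be bounded below away from zero on $\mathcal{X}_0$), whereas you split $\partial(f+\iota_C)=\partial f+N_C$ and absorb the mismatch into $2(c_{k+1}-c_k)\eta$ with $\|\eta\|_2$ bounded by convexity of the real-valued $f$ on the compact level set --- an equivalent argument leading to the same estimate.
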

    \begin{proof}
    	We first prove Item \ref{pro:5.1}. Thanks to Proposition \ref{propos:4.2}, we know that \eqref{eq:problem4.4} holds with $\widehat{x}^k=x^{k+1}$ for any $k
    	\in \mathbb{N}$. This together with \eqref{equat:4.7} implies Item \ref{pro:5.1} for $a=\sigma/2$.\\
    	\indent We next prove Item \ref{pro:5.2}. We first show the following inclusion holds
    	\begin{equation}
    		\partial H(x^{k+1},z^k,c_{k+1}) \supseteq 
    		\begin{bmatrix}
    			&2c_{k+1}\partial(f+\iota_{C})(x^{k+1}) +\nabla h_1(x^{k+1}) - c_{k+1}^2\nabla g(x^{k+1}) -z^{k}\\
    			&\partial h_2^{\star}(z^k)-x^{k+1}\\
    			& 2f(x^{k+1})-2c_{k+1}g(x^{k+1})
    		\end{bmatrix}.
    		\label{eq:5.1}
    	\end{equation}
    	Set $\widetilde{H}(x,c,z)=H(x,z,c)$ for $(x,z,c)\in C\times\mathrm{dom}\partial h_2^{\star}\times (0,+\infty)$. Then, we derive from \cite{attouch2010proximal} that
    	\begin{equation}\label{equat:5.3}
    		\partial\widetilde{H}(x,c,z)=\partial(2cf(x)+\iota_{C}(x)-c^2g(x)+h_1(x))(x,c)\times \partial h_2^{\star}(z)-\begin{bmatrix}z \\ 0 \\ x \end{bmatrix}.
    	\end{equation}
    	The continuous differentiability of $g$ and $h_1$ leads to
    	\begin{equation}\label{equat:5.4}
    		\partial(2cf(x)+\iota_{C}(x)-c^2g(x)+h_1(x))(x,c)=\partial(2cf(x)+\iota_{C}(x))(x,c)+\begin{bmatrix}\nabla h_1(x)-c^2\nabla g(x) \\-2cg(x) \end{bmatrix}.
    	\end{equation}
    	According to \cite[Theorem 10.6]{rockafellar1998variational},  $\widehat{\partial}(2cf(x)+\iota_{C}(x))(x,c)=S_1\times \{2f(x)\}$ with $S_1\subseteq \mathbb{R}^n$. Due to the continuity of $f$ and the definition of limiting subdifferential, we have that
    	\begin{equation}\label{equat:5.5}
    		\partial(2cf(x)+\iota_{C}(x))(x,c)=S_2\times \{2f(x)\}
    	\end{equation}
    	with $S_2\subseteq\mathbb{R}^n$. Now, in view of \cite[Corollary 10.11]{rockafellar1998variational}, we deduce
    	\begin{equation}\label{equat:5.6}
    		S_2\supseteq \partial_x(2cf(x)+\iota_{C}(x)).
    	\end{equation}
    	Then, \eqref{equat:5.3}, \eqref{equat:5.4}, \eqref{equat:5.5} and \eqref{equat:5.6} yield \eqref{eq:5.1} from the fact that $\alpha \iota_{C}=\iota_{C}$ for any $\alpha>0$.\\
    	\indent It is known from the proximal Step \eqref{eq:problem4.3} of Algorithm \ref{al:1} and the generalized Fermat's Rule that
    	\begin{equation}
    		\omega^{k+1}_1:=\frac{x^{k}-x^{k+1}}{2\alpha_kc_k}-\frac{\nabla h_1(x^{k}) - c_{k}^2\nabla g(x^k) -z^{k}}{2c_k} \in \partial(f+\iota_{C})(x^{k+1}),
    		\label{eq:5.2}
    	\end{equation}
    	due to $\alpha\iota_{C}=\iota_{C}$ for any $\alpha>0$. Then, with the help of \eqref{eq:5.1} as well as $z^k \in \partial h_2(x^k)$, we obtain from $c_{k+1}=f(x^{k+1})/g(x^{k+1})$ that
    	\[
    	\omega^{k+1}:=(\omega^{k+1}_x,x^{k}-x^{k+1},0) \in \partial H(x^{k+1},z^k,c_{k+1}),
    	\]
    	where $\omega^{k+1}_x$ is given as 
    	\begin{equation}
    		\begin{aligned}
    			\omega^{k+1}_x&:=2c_{k+1}\omega^{k+1}_1 -c^2_{k+1}\nabla g(x^{k+1})+\nabla h_1(x^{k+1})-z^k.\\
    		\end{aligned}
    		\label{eq:5.3}
    	\end{equation}
    	\indent We next show $\|\omega^{k+1}\|_2\leq b\|x^{k+1}-x^k\|_2$ for some $b>0$. It suffices to prove that $\omega^{k+1}_x$ can be bounded by the term $\|x^{k+1}-x^k\|_2$. In view of \eqref{eq:5.2} and \eqref{eq:5.3}, we rewrite $\omega^{k+1}_x$ as
    	\[
    	\begin{aligned}
    		\omega^{k+1}_x
    		&=\frac{c_{k+1}}{\alpha_kc_k}(x^{k}-x^{k+1})+(\nabla h_1(x^{k+1})-\frac{c_{k+1}}{c_k}\nabla h_1(x^{k}))\\
    		&+c_{k+1}(c_k\nabla g(x^k)-c_{k+1}\nabla g(x^{k+1}))+(\frac{c_{k+1}}{c_k}-1)z^k. 
    	\end{aligned} 
    	\]
    	Due to the Lipschitz continuity of $f/g$ on the compact set $\mathcal{X}_{0}$, we set $M_{f/g}$, $m_{f/g}$ and $L_{f/g}$ be the supremum, minimum and the Lipschitz modulus of $f/g$ on $\mathcal{X}_{0}$, respectively. We also have $M_{\partial h_2}:=\sup\{\|z\|_2:z\in\partial h_2(x),x\in\mathcal{X}_{0}\}$ is finite from the convexity of the real valued function $h_2$ and the compactness of $\mathcal{X}_{0}$. Then, we deduce
    	\[
    	\begin{aligned}
    		\left\|\frac{c_{k+1}}{\alpha_kc_k}(x^k-x^{k+1})\right\|_2 &\leq \frac{M_{f/g}}{\min\left\{\underline{\alpha},\underline{\alpha}_{\sigma}\right\}\gamma \,m_{f/g}}\|x^{k+1}-x^k\|_2,\\
    		\left\|(\frac{c_{k+1}}{c_{k}}-1)z^k\right\|_2
    		&\leq \frac{L_{f/g}M_{\partial h_2}}{m_{f/g}}\|x^{k+1}-x^k\|_2.
    	\end{aligned}     
    	\]
    	Moreover, we have
    	\begin{align}\notag
    			\|\nabla h_1(x^{k+1})-\frac{c_{k+1}}{c_k}\nabla h_1(x^{k})\|_2
    			&=\frac{1}{c_k}\|c_k\nabla h_1(x^{k+1})-c_k\nabla h_1(x^{k})+c_k\nabla h_1(x^{k})-c_{k+1}\nabla h_1(x^{k})\|_2\\
    			&\leq (L_{\nabla h_1}+M_{\nabla h_1}L_{f/g}/m_{f/g}) \|x^{k+1}-x^k\|_2,        	
    	\end{align}
    	where $L_{\nabla h_1}$ is the Lipschitz modulus of $\nabla h_1$ on the compact $\mathcal{X}_{0}$, and $M_{\nabla h_1}:=\sup\{\|\nabla h_1(x)\|_2:x\in\mathcal{X}_{0}\}$. In addition,
    	\[
    	\|c_{k+1}(c_k\nabla g(x^k)-c_{k+1}\nabla g(x^{k+1}))\|_2\leq M_{f/g}(M_{f/g}L_{\nabla g}+M_{\nabla g} L_{f/g})\|x^{k+1}-x^k\|_2
    	\]  
    	with $L_{\nabla g}$ being the Lipschitz modulus of $\nabla g$ on $\mathcal{X}_0$. These imply that $\|\omega^{k+1}_x\|_2 \leq b_x\|x^{k+1}-x^k\|_2$ for some $b_x>0$. Then, we complete the proof immediately.
    \end{proof}
    \indent Now, we are ready to present the main result of this section in the following theorem.
    \begin{theorem}\label{theo5.1}
    	Suppose that Assumption \ref{ass:4.1} holds. If $H$ defined by \eqref{equation:4.2} is a proper closed KL function, then the sequence $\{x^k:k\in \mathbb{N}\}$ generated by Algorithm \ref{al:1} converges to a critical point of $F$.
    \end{theorem}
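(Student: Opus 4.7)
The plan is to apply the KL sequential convergence framework of Proposition~\ref{prop:5.1} to the potential $\Psi = H$, grouping the iterates as $u^k := x^k$ and $v^k := (z^{k-1}, c_k)$, so that $\Psi(u^{k+1}, v^{k+1})$ coincides with the quantity $H(x^{k+1}, z^k, c_{k+1})$ appearing in Proposition~\ref{proposition:5.1}. Boundedness of $\{(u^k, v^k)\}$ follows from Corollary~\ref{corollary:4.1}~\ref{cor:4.1}, and Proposition~\ref{proposition:5.1}~\ref{pro:5.1}--\ref{pro:5.2} already furnish the sufficient-descent condition (with constant $\sigma/2$) and the relative-error condition (with the constant constructed in its proof) in exactly the form demanded by Proposition~\ref{prop:5.1}. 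Thus the only remaining hypothesis is the continuity condition.

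The main task will be to verify that $H_\infty := \lim_{k\to\infty} H(x^k, z^{k-1}, c_k)$ exists and that $H$ is constant on the accumulation set $\Upsilon$ of $\{(x^k, z^{k-1}, c_k)\}$. For existence of $H_\infty$, I would sandwich: the Fenchel-Young inequality \eqref{equat:4.7} gives $F(x^{k+1}) \leq H(x^{k+1}, z^k, c_{k+1})$, while the line-search acceptance \eqref{eq:problem4.4} with $\widehat{x}^k = x^{k+1}$ gives $H(x^{k+1}, z^k, c_{k+1}) \leq F(x^k)$, so both sequences share the common limit guaranteed by Corollary~\ref{corollary:4.1}~\ref{cor:4.2}. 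For constancy on $\Upsilon$, I would fix a subsequence $(x^{k_j}, z^{k_j-1}, c_{k_j}) \to (x^\star, z^\star, c_\star)$. Corollary~\ref{corollary:4.1}~\ref{cor:4.3} forces $x^{k_j-1} \to x^\star$ as well, and the outer semicontinuity of $\partial h_2$ applied to $z^{k_j-1} \in \partial h_2(x^{k_j-1})$ yields $z^\star \in \partial h_2(x^\star)$. The attendant Fenchel-Young equality $h_2^\star(z^\star) + h_2(x^\star) = \langle x^\star, z^\star\rangle$, combined with the continuity of $f$, $g$, $h_1$ at $x^\star \in \mathcal{X}_0 \subseteq \Omega$ (using Lemma~\ref{lemm:4.1}~\ref{lem:4.1}) and $c_\star = f(x^\star)/g(x^\star)$, then collapses $H(x^\star, z^\star, c_\star)$ to $F(x^\star)$, which equals $H_\infty$ by the sandwiching argument.

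With all three hypotheses of Proposition~\ref{prop:5.1} verified and the KL assumption on $H$ in hand, that proposition delivers $\sum_{k} \|x^{k+1} - x^k\|_2 < +\infty$, so $\{x^k\}$ is Cauchy and converges to a single limit $x^\star$. Since $x^\star$ is then the unique accumulation point of $\{x^k\}$, Theorem~\ref{the:4.1} identifies it as a critical point of $F$, completing the argument.

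The principal technical point will be the continuity step: $H$ was engineered precisely so that its Fenchel-Young slack against $F$ closes in the limit, and the verification hinges on the outer semicontinuity of $\partial h_2$ together with the convergence $c_{k_j} \to f(x^\star)/g(x^\star)$, which is only meaningful because $x^\star \in \Omega$. The sufficient-descent and relative-error inequalities have already been established in Proposition~\ref{proposition:5.1}, so no further quantitative estimates are needed beyond those.
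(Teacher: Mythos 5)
Your proposal is correct and follows essentially the same route as the paper: both invoke Proposition~\ref{prop:5.1} with potential $H$, obtain boundedness from Corollary~\ref{corollary:4.1}, sufficient descent and relative error from Proposition~\ref{proposition:5.1}, and verify the continuity condition by sandwiching $F(x^{k+1}) \leq H(x^{k+1},z^k,c_{k+1}) \leq F(x^k)$ together with the closedness of $\partial h_2$ and the Fenchel--Young equality to collapse $H$ to $F$ on the accumulation set. The only cosmetic difference is your explicit grouping $u^k = x^k$, $v^k = (z^{k-1},c_k)$, which the paper leaves implicit.
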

    \begin{proof}
    	Let the sequence $\{(x^k,z^k,c_k):k\in \mathbb{N}\}$ be  generated by Algorithm \ref{al:1}. Then, in view of Proposition \eqref{prop:5.1}, Corollary \ref{corollary:4.1}, Theorem \ref{eq:problem4.1} and Proposition \ref{proposition:5.1}, it remains to prove that $H_{\infty}:=\lim_{k \to \infty}H(x^{k+1},z^k,c_{k+1})$ exists and $H\equiv H_{\infty}$ on the set of all accumulation points of $\{(x^{k+1},z^k,c_{k+1}):k\in\mathbb{N}\}$. Invoking the Step 2 \eqref{eq:problem4.4} of Algorithm \ref{al:1} and \eqref{equat:4.7}, we have $F(x^{k+1})\leq H(x^{k+1},z^k,c_{k+1})\leq F(x^k)$. This together with Corollary \ref{corollary:4.1} \ref{cor:4.2} leads to the existence of $H_{\infty}$ and $H_{\infty}=\lim_{k\to\infty}F(x^k)$.\\
    	\indent Let $(x^{\star},z^{\star},c_{\star})$ be any accumulation point of $\{(x^{k+1},z^k,c_{k+1}):k\in\mathbb{N}\}$ and suppose $\lim_{j \to \infty}(x^{k_j+1},z^{k_j},c_{k_j+1})$ = $(x^{\star},z^{\star},c_{\star})$. First, the continuity of $f/g$ on $\mathcal{X}_{0}$ lead to $c_{\star}=\lim_{j \to \infty}c_{k_j+1}=\lim_{j \to \infty}f/g(x^{k_j+1})=f(x^{\star})/g(x^{\star})$. Secondly, we have $\lim_{j \to \infty}x^{k_j+1}=x^{\star}$ thanks to $\lim_{j \to \infty}x^{k_j}=x^{\star}$ and Corollary \ref{corollary:4.1}. Thirdly, the closedness of subdifferential and $z^{k_j}\in \partial h_2(x^{k_j})$ for $j\in \mathbb{N}$ leads to $z^{\star}\in \partial h_2(x^{\star})$. These together with the continuity of $f$ and $h_1$ as well as the Fenchel-Young equality yield that 
    	\[
    	H(x^{\star},z^{\star},c_{\star})=F(x^{\star})\stackrel{(a)}{=}\underset{j \to \infty}{\lim}F(x^{k_j+1})\stackrel{(b)}{=}\underset{k\to\infty}{\lim}F(x^k),
    	\]
    	where (a) follows from the continuity of $F$ on $\mathcal{X}_{0}$ and (b) holds thanks to Corollary \ref{corollary:4.1}. This completes the proof. 
    \end{proof}
    Finally, we consider the sequential convergence of the solution sequence generated by applying Algorithm \ref{al:2} to problems \eqref{eq:1.2}, \eqref{eq:1.3} and \eqref{eq:1.4}.
    \begin{lemma}\label{lemm5.1}
    For each of problems \eqref{eq:1.2}, \eqref{eq:1.3} and \eqref{eq:1.4}, the function $H$ defined by \eqref{equation:4.2} is a KL function.
    \end{lemma}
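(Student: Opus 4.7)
The plan is to verify, for each of the three problems, that the potential function $H$ from \eqref{equation:4.2} is proper, lower semicontinuous, and definable in some o-minimal structure; the KL property at every point of $\mathrm{dom}\,\partial H$ then follows from the standard consequence that proper lsc functions definable in an o-minimal structure are KL functions (generalizing the semi-algebraic/subanalytic results of Bolte--Daniilidis--Lewis via Kurdyka's theorem). Decomposing
\[
H(x,z,c) = 2c f(x) + \iota_{\mathcal{X}}(x) - c^{2} g(x) + h_{1}(x) + h_{2}^{\star}(z) - \langle x,z\rangle,
\]
with $f(x) = \sqrt{\lambda}\|x\|_{1}$, $g(x) = \|x\|_{2}^{2}$, $h_{i}(x) = q_{i}(Ax-b)$, the pieces $2cf(x)$, $c^{2}g(x)$, $\iota_{\mathcal{X}}$ (the indicator of a hyperrectangle) and $\langle x,z\rangle$ are semi-algebraic in their respective variables. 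Only $h_{1}$ and $h_{2}^{\star}$ depend on the specific model, so the analysis reduces to these two terms.

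For problem \eqref{eq:1.2}, $h_{1}(x) = \tfrac{1}{2}\|Ax-b\|_{2}^{2}$ is a polynomial, and $q_{2}\equiv 0$ gives $h_{2}^{\star} = \iota_{\{0\}}$; both are semi-algebraic, so $H$ is semi-algebraic and hence KL. For problem \eqref{eq:1.4}, $h_{1}$ is the same quadratic. The key observation for $h_{2}$ is the pointwise-max representation
\[
\tfrac{1}{2}\|T_{r}(y)\|_{2}^{2} = \max_{I\subseteq\{1,\dots,m\},\,|I|=r}\tfrac{1}{2}\sum_{i\in I} y_{i}^{2},
\]
which exhibits $h_{2}(x) = \tfrac12\|T_{r}(Ax-b)\|_{2}^{2}$ as a finite maximum of quadratics composed with an affine map, hence semi-algebraic. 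Because the excerpt already shows that $h_{2}$ is proper lsc and convex, I invoke the standard stability result that the Fenchel conjugate of a proper lsc convex semi-algebraic function is semi-algebraic; therefore $h_{2}^{\star}$, and in turn $H$, is semi-algebraic in this case as well.

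For problem \eqref{eq:1.3}, $h_{2}^{\star} = \iota_{\{0\}}$ is again semi-algebraic, but $h_{1}(x) = \sum_{i=1}^{m} \log\bigl(1 + \gamma^{-2}(Ax-b)_{i}^{2}\bigr)$ is not. The argument $1 + \gamma^{-2}(Ax-b)_{i}^{2}$ is a positive polynomial, and $\log$ is definable in the o-minimal structure $\mathbb{R}_{\mathrm{an},\mathrm{exp}}$, which contains the semi-algebraic functions and is closed under finite sums and compositions. Hence $h_{1}$, and then the whole of $H$, is definable in $\mathbb{R}_{\mathrm{an},\mathrm{exp}}$, and the KL property follows. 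The main obstacle I anticipate is the conjugation step in case \eqref{eq:1.4}: one must carefully justify that the semi-algebraic class of proper lsc convex functions is closed under Fenchel conjugation and, along the way, verify that the pointwise-max formula above does produce a semi-algebraic description of $h_{2}$ after the affine substitution $y = Ax-b$. Once definability is established in each case, lower semicontinuity of $H$ (automatic from lsc of $h_{2}^{\star}$ and continuity of the remaining terms on $\mathcal{X}\times\mathbb{R}^{n}\times\mathbb{R}$) finishes the verification that $H$ is a KL function.
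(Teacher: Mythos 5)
Your argument is correct, and for problem \eqref{eq:1.2} it coincides with the paper's (both reduce to semi-algebraicity). For the other two cases you take genuinely different, but equally valid, routes. For problem \eqref{eq:1.4} the paper observes that $h_2$ is convex piecewise linear--quadratic and invokes \cite[Theorem 11.14]{rockafellar1998variational} to conclude that $h_2^{\star}$ is again piecewise linear--quadratic, hence semi-algebraic; you instead exhibit $h_2$ as a finite maximum of quadratics (which is the same structural fact seen from a different angle) and then appeal to closure of the semi-algebraic class under Fenchel conjugation, which follows from Tarski--Seidenberg quantifier elimination and in fact needs neither convexity nor lower semicontinuity of $h_2$ --- so your version is marginally more general, while the paper's avoids having to cite conjugation-stability at all. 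For problem \eqref{eq:1.3} the paper argues that the Lorentzian norm is real analytic, so the graph of $H$ is semianalytic, $H$ is subanalytic and continuous on its closed domain, and the KL property follows from \cite[Theorem 3.1]{bolte2007lojasiewicz}; you instead place $H$ in the o-minimal structure $\mathbb{R}_{\mathrm{an},\exp}$ and use the KL property of proper lsc definable functions. The o-minimal route is somewhat more robust, since $\log(1+\gamma^{-2}(Ax-b)_i^2)$ has an unbounded argument and is therefore not \emph{globally} subanalytic, whereas definability in $\mathbb{R}_{\mathrm{an},\exp}$ is unambiguous; on the other hand, the KL property is local, so the paper's subanalyticity argument suffices and stays within the more classical Bolte--Daniilidis--Lewis framework. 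Both proofs correctly check the remaining hypotheses (properness and lower semicontinuity of $H$), and I see no gap in yours.
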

    \begin{proof}
    	For problem \eqref{eq:1.2}, one can easily check that $H$ defined by \eqref{equation:4.2} is a semialgebraic function \cite[Page 596]{facchinei2003finite}, and is hence a KL function \cite{attouch2010proximal}. For problem \eqref{eq:1.3}, since the logarithmic function is analytic on $[1,+\infty)$, we have $x\to\|Ax-b\|_{LL_2,\gamma}$ is analytic on $\mathbb{R}^n$. Then, it is obvious that the graph of $H$ in this case is a semianalytic set in $\mathbb{R}^{2n+2}$. This means that $H$ is a subanalytic function\cite[Definition 6.6.1]{facchinei2003finite}. Moreover, $\mathrm{dom}(H)=\mathcal{X}\times\mathbb{R}^n\times\mathbb{R}$ is closed and $H$ is continuous on its domain. Therefore, $H$ is a KL function according to \cite[Theorem 3.1]{bolte2007lojasiewicz}. We finally focus on problem \eqref{eq:1.4}. Noting that $h_2$ in this case is a convex piecewise linear-quadratic function, $h_2^{\star}$ is also piecewise linear-quadratic \cite[Theorem 11.14]{rockafellar1998variational}. Thus, it can be easily checked that $H$ in this case is a semialgebraic function, and hence is a KL function \cite{attouch2010proximal}. The proof is completed.
    \end{proof}
    With the help of Lemma \ref{lemm5.1} and Theorem \ref{theo5.1}, we immediately obtain the following result.
   \begin{theorem} \label{theo:5.2}
   Consider problems \eqref{eq:1.2}, \eqref{eq:1.3} and \eqref{eq:1.4}. Then the solution sequence generated by applying Algorithm \ref{al:2} to each of them converges to a critical point of the respective problem, if the initial point $x^0$ satisfies \eqref{eq4.12} and $\mathcal{X}$ is bounded.
   \end{theorem}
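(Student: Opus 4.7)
The plan is to directly invoke Theorem \ref{theo5.1} on each of the three concrete problems \eqref{eq:1.2}, \eqref{eq:1.3}, \eqref{eq:1.4}, viewed as instances of the abstract fractional program \eqref{eq:problem2} via the correspondences $f=\sqrt{\lambda}\|\cdot\|_1$, $g=\|\cdot\|_2^2$, $h_1(x)=q_1(Ax-b)$, $h_2(x)=q_2(Ax-b)$, $C=\mathcal{X}$ identified just after the model definition. Two hypotheses need to be checked: (i) Assumption \ref{ass:4.1} on compactness of the level set $\mathcal{X}_0$; and (ii) the KL property of the auxiliary function $H$ defined in \eqref{equation:4.2}. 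Once both are in hand, Theorem \ref{theo5.1} applies verbatim, since Algorithm \ref{al:2} is by construction the specialization of Algorithm \ref{al:1} to model \eqref{eq:problem1}.

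First I would handle Assumption \ref{ass:4.1}. Boundedness of $\mathcal{X}_0$ is immediate from $\mathcal{X}_0\subseteq\mathcal{X}$ together with the hypothesis that $\mathcal{X}$ is bounded. For closedness I would apply Proposition \ref{prop.4.2}: for each of the three problems the zero vector is the unique point at which both $f$ and $g$ vanish, as highlighted in the remark following \eqref{eq4.12}, and \eqref{eqt3.1} gives $\liminf_{z\to 0}F(z)=\lambda+q(-b)$. Hence the sufficient condition \eqref{eq.4.2} of Proposition \ref{prop.4.2} collapses exactly to the standing hypothesis $F(x^0)<\lambda+q(-b)$ stated in \eqref{eq4.12}. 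This yields closedness of $\mathcal{X}_0$, and together with boundedness gives the compactness required by Assumption \ref{ass:4.1}.

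Second, the KL property of $H$ for each of the three loss structures is precisely the content of Lemma \ref{lemm5.1}, so nothing new is needed there. With both hypotheses of Theorem \ref{theo5.1} verified, the theorem delivers convergence of $\{x^k:k\in\mathbb{N}\}$ to a critical point of the extended objective $F$ of the respective problem, which in view of Definition \ref{de:3.1} is by definition a critical point of the problem itself.

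I do not expect a real obstacle here: the substantive work (the descent and relative-error estimates of Proposition \ref{proposition:5.1}, the abstract KL framework of Proposition \ref{prop:5.1}, and the semialgebraic/subanalytic verification of Lemma \ref{lemm5.1}) has already been carried out earlier in the paper. The only thing requiring a small amount of care is the bookkeeping that matches the abstract data $(f,g,h_1,h_2,C)$ in \eqref{eq:problem2} to the concrete data of \eqref{eq:1.2}--\eqref{eq:1.4}, and confirms that in each case the initial-point condition \eqref{eq4.12} really does reduce the abstract closedness condition \eqref{eq.4.2} to the stated hypothesis.
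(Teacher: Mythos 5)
Your proposal is correct and follows exactly the route the paper takes: verify Assumption \ref{ass:4.1} by combining the boundedness of $\mathcal{X}$ with Proposition \ref{prop.4.2} (noting that \eqref{eq.4.2} reduces to \eqref{eq4.12} via \eqref{eqt3.1}), invoke Lemma \ref{lemm5.1} for the KL property of $H$, and then apply Theorem \ref{theo5.1}. The paper states this result as an immediate consequence of those two ingredients, so nothing further is needed.
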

   \section{Numerical experiments}\label{sec:6}
	In this section, we conduct numerical experiments on solving three examples of model \eqref{eq:problem1},  namely, models \eqref{eq:1.2}, \eqref{eq:1.3} and \eqref{eq:1.4} by Algorithm \ref{al:2}. All the experiments are performed in MATLAB R2023b on a desktop with an Intel Core i7-13650HX CPU (2.60GHz) and 24GB of RAM.\\	
	\indent In our experiments, we mainly compare the performance of Algorithm \ref{al:2} with proximal DCA with monotone line search (PDCAL) \cite{gotoh2018dc} for solving the respect model with $\mathcal{X}=\mathbb{R}^n$. Although PDCAL is originally designed for solving DC (Difference of Convex) programs, it can be trivially extended to model \eqref{eq:problem1} with $\mathcal{X}=\mathbb{R}^n$ by using the proximal operator of $\lambda\|\cdot\|_1^2/\|\cdot\|_2^2$. The resulting algorithm is presented in Algorithm \ref{al:3}, where the proximal operator $\text{prox}_{\alpha\lambda\|\cdot\|_1^2/\|\cdot\|_2^2}$ is computed according to \cite[Theorem 3.8]{jia2024computingproximityoperatorsscale}.
     \begin{algorithm}[H]
    	\caption{PDCAL for solving problem \eqref{eq:problem1} with $\mathcal{X}=\mathbb{R}^n$}
    	\label{al:3}
    	\begin{algorithmic}[H]
    		\State \textbf{Step 0.} Input: $x^0 \in \mathbb{R}^n$, $0 < \underline{\alpha} < \overline{\alpha}$, $\sigma > 0$, $0 < r < 1$, and set $k \leftarrow 0$.
    		
    		\State \textbf{Step 1.} Choose $z^k \in A^T\partial q_2(Ax^k-b)$.
    		\State \hspace{3.7em} Set $\alpha:=\widetilde{\alpha}_k\in [\underline{\alpha}, \overline{\alpha}]$.
    		
    		\State \textbf{Step 2.} Compute
    		\[
    		\widehat{x}^{k} \in \mathrm{prox}_{\alpha \lambda\|\cdot\|_1^2/\|\cdot\|_2^2}( x^{k} - \alpha A^T\nabla q_1(Ax^k-b)+\alpha z^k).
    		\]
    		\State \hspace{3.7em} If $\widehat{x}^{k}$  satisfies
    		\[
    		\lambda\frac{\|\widehat{x}^k\|_1^2}{\|\widehat{x}^k\|_2^2} +q(A\widehat{x}^k-b)+\frac{\sigma}{2}\|\widehat{x}^k-x^k\|_2^2
    		\leq \lambda\frac{\|x^k\|_1^2}{\|x^k\|_2^2}+q(Ax^k-b);
    		\]
    		\State \hspace{3.7em} Then, set $x^{k+1}=\widehat{x}^{k}$ and go to \textbf{Step 3};
    		\State \hspace{3.7em} Else, set $\alpha:=\alpha r$ and repeat \textbf{Step 2}.
    		\State \textbf{Step 3.} Record $\alpha_k:=\alpha$, set $k \leftarrow k+1$, and go to \textbf{Step 1}.
    	\end{algorithmic}
    \end{algorithm}
	The experimental setup and test instances are directly adopted from \cite[Section 7]{zeng2021analysis}. For PDCAL and Algorithm \ref{al:2}, we use the same parameters as follows. We set $\underline{\alpha} = 10^{-4}$, $\overline{\alpha}=10^4$, $\sigma = 10^{-3}$ and $r=0.5$ throughout the tests. Inspired by the Barzilai-Borwein method \cite{10.1093/imanum/8.1.141}, we set the initial stepsize $\widetilde{\alpha}_k$ in Step 1 of Algorithm \ref{al:2} as 
	\[
	  \widetilde{\alpha}_k:= \text{max}\left\{\underline{\alpha},\text{min}\left\{\bar{\alpha},\frac{\|\Delta x^k\|_2^2}{|\left\langle \Delta x^k,\Delta q_1^k\right\rangle|}\right\}\right\},
	\]
	where $\Delta x^k = x^k- x^{k-1}$ and $\Delta q_1^k = A^{T}\left(\nabla q_1(Ax^k-b) - \nabla q_1(Ax^{k-1}-b)\right)$ if $k\geq 1$ and $|\left\langle \Delta x^k,\Delta q_1^k \right\rangle|\neq 0$; otherwise, we choose $\widetilde{\alpha}_k =1$. Both PDCAL and Algorithm \ref{al:2} are initialized at the same point as in \cite[Section 7]{zeng2021analysis}, and terminated when 
	\begin{equation}\label{eq:6.1}
		\|x^k-x^{k-1}\|_2\leq \textit{tol} \cdot \text{max}\{\|x^k\|,1\}.
	\end{equation}
    We will specify the choices of initial points and tol in each of the subsection below. In particular, we verify that for all the tests these initial points satisfy \eqref{eq4.12}. Therefore, in view of Theorem \ref{theo:5.2}, we can guarantee the sequential convergence of the solution sequence generated by Algorithm \ref{al:2} and PDCAL in the experiments if they are bounded.
    \subsection{Robust compressed sensing problems \eqref{eq:1.4}.}
    We construct a sensing matrix $A\in\mathbb{R}^{(p+d)\times n}$ with the entries following i.i.d. standard Gaussian distribution and then normalize each column of $A$. Next, the original signal $\widetilde{x}\in\mathbb{R}^n$ is generated as a $K$-sparse vector with $K$ i.i.d. standard Gaussian entries at uniformly chosen random positions. Also, we generate a vector $z\in\mathbb{R}^{p+d}$ with the first $p$ entries being zero and the last $d$ entries being 2sign$(z_{d})$, where the entries of $z_{d}$ are i.i.d. and drawn from standard Gaussian distribution. Finally, the noisy measurement $b\in\mathbb{R}^{p+d}$ is produced as $b=A \widetilde{x} - z +0.01\varepsilon$, where $\varepsilon\in\mathbb{R}^{p+d}$ has i.i.d. standard Gaussian entries. We set the parameters $r=2d$ and $\lambda = 0.01$ for model \eqref{eq:1.4} throughout the tests. We choose $A^{\dagger}b$ as the initial point and set $tol=10^{-6}$ in \eqref{eq:6.1}.\\
    \indent In our tests, we consider $(n, p, K, d)=(2560i,720i,80i,10i)$ with $i\in\{2, 4, 6, 8, 10\}$. For each $i$, 20 random instances are generated as described above. Table 1 summarizes the computational results. We report the computational time in seconds (Time), the objective value (Obj) and the recovery error (RecErr = $\|\widehat{x}-\widetilde{x}\|_2/\text{max}\{1,\|\widetilde{x}\|_2\}$ with $\widehat{x}$ being the output of the algorithms) averaged over 20 random instances. We observe that the objective values and recovery error obtained by Algorithm \ref{al:2} are comparable to those of PDCAL, while Algorithm \ref{al:2} significantly outperforms PDCAL in terms of computational time.
    \begin{table}[htbp]
    	\centering
    	\caption{Random tests on Robust compressed sensing.}
    	\begin{tabular}{ccccccc}
    		\toprule
    		\multirow{2}{*}{$i$} & 
    		\multicolumn{2}{c}{Time(s)} &
    		\multicolumn{2}{c}{Obj} & 
    		\multicolumn{2}{c}{RecErr}\\
    		\cmidrule(lr){2-3} \cmidrule(lr){4-5} \cmidrule(lr){6-7}
    		& PDCAL & Algorithm~\ref{al:2} & PDCAL & Algorithm~\ref{al:2} & PDCAL & Algorithm~\ref{al:2} \\
    		\midrule
    		2 & 1.17 & 0.17 & 1.00e+02 &1.00e+02 & 2.25e-02 & 2.27e-02 \\
    		4 & 4.07 & 0.69 & 2.01e+02 &2.01e+02 & 2.25e-02 & 2.26e-02 \\
    		6 & 8.97 & 1.61 & 3.01e+02 &3.01e+02 & 2.24e-02 & 2.25e-02 \\
    		8 & 16.35 & 2.81 & 4.08e+02 &4.08e+02 & 2.23e-02 & 2.23e-02 \\
    		10 & 24.68 & 4.31 & 5.05e+02 &5.05e+02 & 2.25e-02 & 2.25e-02 \\
    		\bottomrule
    	\end{tabular}
    \end{table}
    \subsection{CS problems with Cauchy noise \eqref{eq:1.3}.}
    As in the previous subsection, we construct a sensing matrix $A\in\mathbb{R}^{m\times n}$ whose entries follow the standard Gaussian distribution and then normalize each column of $A$. The true signal $\widetilde{x}\in\mathbb{R}^n$ is generated as a $K$-sparse vector with $K$ i.i.d. standard Gaussian entries at uniformly chosen random positions. The noisy measurement $b\in\mathbb{R}^m$ is produced as $b=A\widetilde{x} + 0.01\varepsilon$ where $\varepsilon \in \mathbb{R}^m$ has i.i.d. entries drawn from standard Cauchy distribution, i.e., $\varepsilon_{i} = \text{tan}(\pi(\widetilde{\varepsilon}_{i}-\frac{1}{2}))$ for $i= 1,2,\cdots,m$ with $\widetilde{\varepsilon}_{i}\in\mathbb{R}^m$ has i.i.d. entries uniformly chosen in $[0,1]$. We set  $\gamma=0.02$ and $\lambda=40$ for model \eqref{eq:1.3} throughout the tests.\\
    \indent The initial points are computed by applying the sequential convex programming (SCP) method \cite{yu2021convergencerateanalysissequential} to the following $\ell_1$ minimization model 
    \[
      \underset{x\in\mathbb{R}^n}{\text{min}} \left\{\|x\|_1:\|Ax-b\|_{LL_2,\gamma}\leq\eta \right\},
    \]
    where $\eta$ is chosen as $1.2\|0.01\varepsilon\|_2$. We use the same parameters for SCP method as in \cite[section 5]{yu2021convergencerateanalysissequential}, except that SCP is terminated when \eqref{eq:6.1} is satisfied with $tol=10^{-6}$. Also, for both PDCAL and Algorithm \ref{al:2}, we set $tol=10^{-6}$ in \eqref{eq:6.1}.\\
    \indent In our experiments, we consider $(n,m,K)=(2060i,720i,80i)$ with $i\in \left\{2,4,6,8,10\right\}$. For each $i$, 20 random instances are generated as described above. The computational results are presented in Table 2, where we report the computational time in seconds (Time), the objective values (Obj) and the recovery error (RecErr) averaged over 20 random instances. We observe that Algorithm \ref{al:2} can achieve the same objective values and recovery accuracy at about one fifth of the computational time.
    
    \begin{table}[htbp]
    	\centering
    	\caption{Random tests on CS problems with Cauchy noise.} 
    	\begin{tabular}{ccccccc}
    		\toprule
    		\multirow{2}{*}{$i$} & 
    		\multicolumn{2}{c}{Time(s)} & 
    		\multicolumn{2}{c}{Obj} & 
    		\multicolumn{2}{c}{RecErr}\\
    		\cmidrule(lr){2-3} \cmidrule(lr){4-5} \cmidrule(lr){6-7}
    		& PDCAL & Algorithm~\ref{al:2} & PDCAL & Algorithm~\ref{al:2} & PDCAL & Algorithm~\ref{al:2} \\
    		\midrule
    		2 & 0.98 &  0.15 & 1.01e+02 &1.01e+02 & 4.13e-02 & 4.13e-02\\
    		4 & 3.71 &  0.64 & 1.99e+02 &1.99e+02 &4.26e-02 & 4.26e-02\\
    		6 & 8.14 &  1.61 & 3.02e+02 &3.02e+02 & 4.19e-02 & 4.19e-02\\
    		8 &  13.89 &  2.59 & 4.06e+02 &4.06e+02 &4.13e-02 & 4.13e-02\\
    		10 & 21.87 &  4.15 & 5.05e+02 &5.05e+02 &4.14e-02 & 4.14e-02\\
    		\bottomrule
    	\end{tabular}
    \end{table}
	\subsection{CS problems with Gaussian noise \eqref{eq:1.2}.}
	In this section, we conduct a sensing matrix $A \in \mathbb{R}^{m\times n}$ by oversampled discrete cosine transformation, i.e., $A=[a_1,a_2,\cdots,a_n]\in\mathbb{R}^{m\times n}$ with
	\[
	  a_j = \frac{1}{\sqrt m} \text{cos} \left(\frac{2\pi\omega j}{F}\right),j=1,2,\cdots,n,
	\]
    where $\omega\in\mathbb{R}^m$ is a random vector following the uniform distribution in $[0,1]^m$ and $F>0$ is a parameter measuring how coherent the matrix is. Then the original signal $\widetilde{x} \in \mathbb{R}^n$ is generated as a $K$-sparse signal with a dynamic range $D>0$. In detail, we generate $\widetilde{x}$ via the MATLAB command below:
    \begin{center}
    \begin{verbatim}
               I = randperm(n); J=I(1:K); x_tilde = zeros (n,1)
               x_tilde(J) = sign(randn(K,1)).*10^(D*rand(K,1));
    \end{verbatim}
    \end{center}
	Next, we produce the noisy measurement $b \in \mathbb{R}^m$ by $b= A\widetilde{x}+0.01\varepsilon$, where $\varepsilon \in \mathbb{R}^m$ has i.i.d. a standard Gaussian entries. We set $\lambda = 0.4$ for model \eqref{eq:1.2} throughout the tests.\\
	\indent As in \cite[Section 7.3]{zeng2021analysis}, we obtain the initial points by applying SPGL method \cite{doi:10.1137/080714488} with default settings to the following $l_1$ minimization model
	\[
	 \underset{x \in \mathbb{R}^n}{\text{min}} \left\{\|x\|_1:\|Ax-b\| \leq \eta \right\},
	\]
	where $\eta$ is chosen as $1.2\|0.01\varepsilon\|_2$. Also, we set $tol = 10^{-8}$ for the compared algorithms.\\
	\indent In our tests, we fix $n=1024$, $m=64$, and consider $K\in\left\{8,12\right\}$, $F\in\left\{5,15\right\}$, $D\in\left\{2,3\right\}$. For each triple $(K,F,D)$, 20 random instances are generated as described above. The computational results are shown in Table 3, where we present the computational time in seconds (Time), the objective values (Obj) and recovery error (RecErr) are averaged over 20 random instances. We see that comparable objective values and recovery error are achieved by Algorithm \ref{al:2}  at about half the computational time of PDCAL.    

	\begin{table}[htbp]
		\centering
		\caption{Random tests on badly scaled CS problems with Gaussian noise.}
		\begin{tabular}{ccccccccc}
			\toprule
			\multirow{2}{*}{$k$} &
			\multirow{2}{*}{$F$} &
			\multirow{2}{*}{$D$} & 
			\multicolumn{2}{c}{Time(s)} &
			\multicolumn{2}{c}{Obj} &  
			\multicolumn{2}{c}{RecErr}\\
			\cmidrule(lr){4-5} \cmidrule(lr){6-7}
			\cmidrule(lr){8-9}
			& & & PDCAL & Algorithm~\ref{al:2} & PDCAL & Algorithm~\ref{al:2} & PDCAL & Algorithm~\ref{al:2}\\
			\midrule
			8 & 5 & 2 & 0.05 & 0.02 & 4.04e+00 & 4.04e+00 & 3.888e-03 &  3.888e-03\\
			8 & 5 & 3 & 0.34 & 0.17 & 2.74e+00 & 2.74e+00 & 6.193e-04 &  6.195e-04 \\
			8 & 15 & 2 & 1.05 & 0.49 & 3.99e+00 & 3.99e+00 &  1.619e-01 & 1.619e-01 \\
			8 & 15 & 3 & 47.36 & 20.91 & 2.85e+00 & 2.85e+00  & 5.433e-02 & 5.432e-02 \\
			12 & 5 & 2 & 0.55 & 0.27 & 5.15e+00 & 5.15e+00 &3.634e-02 & 3.634e-02 \\
			12 & 5 & 3 & 8.88 & 4.13 & 3.80e+00 & 3.80e+00 & 3.682e-03 &  3.682e-03 \\
			12 & 15 & 2 & 1.90 & 0.93 & 5.53e+00 & 5.53e+00 & 1.592e-01 & 1.592e-01 \\
			12 & 15 & 3 & 210.38 & 98.21 & 4.18e+00 & 4.18e+00 & 7.891e-01 & 7.936e-01  \\
			\bottomrule
		\end{tabular}
	\end{table}

	\bibliographystyle{plain}
	\bibliography{document2re2}
\end{document}